\documentclass{amcjoucc}


\usepackage[small,bf,hang]{caption} 

\usepackage{tikz}

\usepackage{hyperref}

\usepackage{color}

\newtheorem{observation}[thm]{Observation}

\graphicspath{{figures/}}

\newenvironment{dedication}
{\begin{quotation}\begin{center}\begin{em}}
{\end{em}\end{center}\end{quotation}}

\begin{document}

\begin{frontmatter}   

\titledata{On Hypohamiltonian Snarks\\ and a Theorem of Fiorini}{}          

\authordata{Jan Goedgebeur}            
{Department of Applied Mathematics, Computer Science \& Statistics, Ghent University, Krijgslaan 281-S9, 9000 Ghent, Belgium}    
{jan.goedgebeur@ugent.be}                     
{The first author is supported by a Postdoctoral Fellowship of the Research Foundation Flanders (FWO).}          

\authordata{Carol T.\ Zamfirescu}            
{Department of Applied Mathematics, Computer Science \& Statistics, Ghent University, Krijgslaan 281-S9, 9000 Ghent, Belgium}    
{czamfirescu@gmail.com}
{The second author is a PhD fellow at Ghent University on the BOF (Special Research Fund) scholarship~01DI1015.}                                       

\keywords{hypohamiltonian, snark, irreducible snark, dot product}               
\msc{05C10, 05C38, 05C45, 05C85.}                       

\begin{dedication}
In memory of the first author's little girl Ella.
\end{dedication}

\begin{abstract}
We discuss an omission in the statement and proof of Fiorini's 1983 theorem on hypohamiltonian snarks and present a version of this theorem which is more general in several ways. Using Fiorini's erroneous result, Steffen showed that hypohamiltonian snarks exist for some $n \ge 10$ and each even $n \ge 92$. We rectify Steffen's proof by providing a correct demonstration of a technical lemma on flower snarks, which might be of separate interest. We then strengthen Steffen's theorem to the strongest possible form by determining all orders for which hypohamiltonian snarks exists. This also strengthens a result of M\'{a}\v{c}ajov\'{a} and \v{S}koviera. Finally, we verify a conjecture of Steffen on hypohamiltonian snarks up to 36 vertices.
\end{abstract}

\end{frontmatter}   

\section{Introduction}

A graph $G$ is \emph{hypohamiltonian} if $G$ itself is non-hamiltonian, but for every vertex $v$ in $G$, the graph $G - v$ is hamiltonian. A \emph{snark} shall be a cubic cyclically 4-edge-connected graph with chromatic index~4 (i.e.\ four colours are required in any proper edge-colouring) and girth at least~5. We refer for notions not defined here to \cite{St01} and \cite{Di10}.

Motivated by similarities between the family of all snarks and the family of all cubic hypohamiltonian graphs regarding the orders for which such graphs exist, Fiorini~\cite{Fi83} studied the hypohamiltonian properties surrounding Isaacs' so-called ``flower snarks''~\cite{Is75} (defined rigorously below). The a priori surprising interplay between snarks and hypohamiltonian graphs has been investigated extensively---we now give an overview. Early contributions include Fiorini's aforementioned paper~\cite{Fi83}, in which he showed that there exist infinitely many hypohamiltonian snarks. (In fact, according to M\'{a}\v{c}ajov\'{a} and \v{S}koviera~\cite{MS07}, it was later discovered that a family of hypohamiltonian graphs constructed by Gutt~\cite{Gu77} includes Isaacs' snarks, thus including Fiorini's result.)

Skupie\'n showed that there exist exponentially many hypohamiltonian snarks~\cite{Sk07}, and Steffen~\cite{St01} proved that there exist hypohamiltonian snarks of order~$n$ for every even $n \ge 92$ (and certain $n < 92$)---we will come back to this result in Section~\ref{sect:theorem_steffen}. For more references and connections to other problems, see e.g.~\cite{MS07,Sk07,BGHM13,steffen20151}. Hypohamiltonian snarks have also been studied in connection with the famous Cycle Double Cover Conjecture~\cite{BGHM13} and Sabidussi's Compatibility Conjecture~\cite{FH09}. 

The smallest snark, as well as the smallest hypohamiltonian graph, is the famous Petersen graph. Steffen~\cite{St98} showed that every cubic hypohamiltonian graph with chromatic index~4 is \emph{bicritical}, i.e.\ the graph itself is not 3-edge-colourable but deleting any two distinct vertices yields a 3-edge-colourable graph. Nedela and \v{S}koviera~\cite{NS96} proved that every cubic bicritical graph is cyclically 4-edge-connected and has girth at least~5. Therefore, every cubic hypohamiltonian graph with chromatic index~4 must be a snark.

This article is organised as follows. In Section~\ref{sec:fiorini} we discuss the omission in Fiorini's theorem on hypohamiltonian snarks~\cite{Fi83} and its consequences and state a more general version of this theorem. In Section~\ref{sect:theorem_steffen} we first rectify a proof of Steffen on the orders for which hypohamiltonian snarks exist which relied on Fiorini's theorem---this erratum is based on giving a correct proof of a technical lemma concerning flower snarks, which may be of separate interest. We then prove a strengthening of Steffen's theorem, which is best possible, as all orders for which hypohamiltonian snarks exist are determined. Our result is stronger than a theorem of M\'{a}\v{c}ajov\'{a} and \v{S}koviera~\cite{MS06} in the sense that our result implies theirs, while the inverse implication does not hold. Finally, in Section~\ref{sect:conjecture_steffen} we comment upon and verify a conjecture of Steffen on hypohamiltonian snarks~\cite{steffen20151} for small hypohamiltonian snarks.

\section{Fiorini's Theorem Revisited}
\label{sec:fiorini}

We call two edges independent if they have no common vertices.
Let $G$ and $H$ be disjoint connected graphs on at least 6~vertices. Consider $G' = G - \{ ab, cd \}$, where $ab$ and $cd$ are independent edges in $G$, $H' = H - \{ x,y \}$, where $x$ and $y$ are adjacent cubic vertices in $H$, and let $a',b'$ and $c',d'$ be the other neighbours of $x$ and $y$ in $H$, respectively. Then the \emph{dot product} $G \cdot H$ is defined as the graph $$(V(G) \cup V(H'), E(G') \cup E(H') \cup \{ aa', bb', cc', dd' \}).$$ Two remarks are in order. (1) Under the above conditions, the dot product may be disconnected. (2) In fact, there are two ways to form the dot product for a specific $ab, cd, xy$. One also has the possibility to add the edges $\{ ac', bd', ca', db' \}$. However, in this paper we will always construct the dot product by adding the former set of edges, summarising this in the following statements as ``$a, b, c, d$ are joined by edges to the neighbours of $x$ and $y$, respectively''.

According to Skupie\'n~\cite{Sk16}, the dot product was introduced by Adel'son-Vel'ski\v{\i} and Titov~\cite{AT73}, and later and independently by Isaacs~\cite{Is75}. Its original purpose was to obtain new snarks by combining known snarks. Isaacs was the first to explicitly construct infinitely many snarks~\cite{Is75}. Fiorini then proved that the dot product can also be used to combine two \emph{hypohamiltonian} snarks into a new one. Unfortunately, Fiorini's argument is incorrect. We shall discuss this omission within this section, and correct the proof of a lemma of Steffen~\cite{St01} which depended on Fiorini's result in Section~\ref{sect:theorem_steffen}.

In a graph $G$, a pair $(v,w)$ of vertices is \emph{good} in $G$ if there exists a hamiltonian path in $G$ with end-vertices $v$ and $w$. Two pairs of vertices $((v,w),(x,y))$ are \emph{good} in $G$ if there exist two disjoint paths which together span $G$, and which have end-vertices $v$ and $w$, and $x$ and $y$, respectively.

\begin{theorem}[Fiorini, Theorem~3~in~\cite{Fi83}]\label{theorem:fiorini}
Let $G$ be a hypohamiltonian snark having two independent edges $ab$ and $cd$ for which\\
(i) each of $(a,c), (a,d), (b,c), (b,d), ((a,b),(c,d))$ is good in $G$;\\
(ii) for each vertex $v$, exactly one of $(a,b), (c,d)$ is good in $G - v$.\par
If $H$ is a hypohamiltonian snark with adjacent vertices $x$ and $y$, then the dot product $G \cdot H$ is also a hypohamiltonian snark, where $ab$ and $cd$ are deleted from $G$, $x$ and $y$ are deleted from $H$, and vertices $a, b, c, d$ are joined by edges to the neighbours of $x$ and $y$, respectively.
\end{theorem}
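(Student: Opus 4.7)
The plan is to verify each defining property of a hypohamiltonian snark for $G\cdot H$ in sequence: cubicity (immediate from the construction), chromatic index~$4$ (the classical dot-product argument of Isaacs, since any proper $3$-edge-colouring of $G\cdot H$ would restrict to $3$-edge-colourings of $G$ and $H$, contradicting that both are snarks), cyclic $4$-edge-connectivity and girth at least~$5$ (any short cycle or small cyclic edge-cut in $G\cdot H$ lifts back to one in $G$ or in $H$), and finally non-hamiltonicity and the hypohamiltonian property.

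For non-hamiltonicity, I would assume a Hamilton cycle $C$ of $G\cdot H$ and analyse it through the $4$-edge cut $F=\{aa',bb',cc',dd'\}$, which $C$ crosses an even number of times. If $C$ uses exactly two edges of $F$, an endpoint analysis shows that either $C\cap G'$ is a Hamilton path in $G-\{ab,cd\}$ between the endpoints of one of the deleted edges (and splicing in $ab$ or $cd$ yields a Hamilton cycle of $G$), or $C\cap H'$ is a Hamilton path with one endpoint in $\{a',b'\}$ and the other in $\{c',d'\}$, which together with $xy$ and one edge of $x$ and one of $y$ closes to a Hamilton cycle of $H$. If $C$ uses all four edges of $F$, then $C\cap G'$ is a pair of disjoint paths spanning $V(G)$ inducing a pairing of $\{a,b,c,d\}$: pairings $\{a,c\},\{b,d\}$ and $\{a,d\},\{b,c\}$ splice with $ab,cd$ to a Hamilton cycle of $G$, and the remaining pairing $\{a,b\},\{c,d\}$ forces, for $C$ to be connected, one of the interlocking $H$-pairings $\{a',c'\},\{b',d'\}$ or $\{a',d'\},\{b',c'\}$, each of which splices through $x$ and $y$ to a Hamilton cycle of $H$. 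In every branch a contradiction arises with $G$ or $H$ being a snark.

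For the hypohamiltonicity of $G\cdot H$, I would fix $v$ and split on its side. If $v\in V(G)$, condition~(ii) supplies a Hamilton path $P$ of $G-v$ from $a$ to $b$ or from $c$ to $d$; on the $H$-side, hypohamiltonicity of $H$ applied to $H-y$ (respectively $H-x$) produces a Hamilton cycle whose two edges at $x$ (respectively $y$) are forced by a degree count to be $xa'$ and $xb'$ (respectively $yc'$ and $yd'$), and deleting the other special vertex leaves a Hamilton path $Q$ of $H'$ from $a'$ to $b'$ (respectively from $c'$ to $d'$) which splices with $P$ through the matching two cut edges. If $v\in V(H')$, I would start with a Hamilton cycle $C_v$ of $H-v$: when $C_v$ uses $xy$, removing $x$ and $y$ yields a Hamilton path of $H'-v$ with one endpoint in $\{a',b'\}$ and one in $\{c',d'\}$, which splices with the Hamilton path of $G$ given by the matching entry of~(i) (one of $(a,c),(a,d),(b,c),(b,d)$) through the appropriate two cut edges.

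The main obstacle, which I suspect is precisely the omission to be discussed in Section~\ref{sec:fiorini}, is twofold. First, the Hamilton paths supplied by~(i) and~(ii) live a priori in $G$ (or in $G-v$) and not in $G-\{ab,cd\}$, so they may traverse one of the deleted edges $ab,cd$ which are absent from $G\cdot H$; the argument therefore needs either to exclude such traversals case by case or to strengthen the hypotheses so that the relevant paths can be chosen to avoid $ab$ and $cd$. Second, in the subcase $v\in V(H')$ where \emph{every} Hamilton cycle of $H-v$ avoids $xy$, removing $x$ and $y$ yields two disjoint paths in $H'-v$ pairing $\{a',b'\}$ and $\{c',d'\}$, and no spanning structure in $G-\{ab,cd\}$ combines with these and the cut edges into a single Hamilton cycle — the natural combinations either produce two components or would force $G$ itself to be hamiltonian. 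Closing these gaps will presumably require additional conditions beyond (i) and (ii), and I expect this is precisely what the next section of the paper will supply.
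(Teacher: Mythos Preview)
Your overall plan and your non-hamiltonicity analysis are essentially the intended ones, and you have correctly put your finger on the genuine omission: conditions~(i) and~(ii) only guarantee the relevant paths in $G$ (resp.\ $G-v$), not in $G-\{ab,cd\}$ (resp.\ $G-\{v,ab,cd\}$), so the splicing step may illegally use one of the deleted edges. This is precisely the defect the paper attributes to Fiorini's statement and the reason the paper offers no proof of Theorem~\ref{theorem:fiorini} as written, replacing it instead by Theorems~\ref{theorem:cav} and~\ref{theorem:gz}, where the goodness hypotheses are moved to $G-\{ab,cd\}$.

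Your \emph{second} obstacle, however, is not a real gap but a miscomputation. Suppose $v\in V(H')$ and the Hamilton cycle $C_v$ of $H-v$ avoids $xy$. Then the two $C_v$-edges at $x$ are $xa'$ and $xb'$, and those at $y$ are $yc'$ and $yd'$. Deleting $x$ from the cycle $C_v$ leaves a single path with endpoints $a',b'$; the vertex $y$ lies in the interior of that path, so deleting $y$ splits it into two paths whose endpoint sets are $\{a',c'\},\{b',d'\}$ or $\{a',d'\},\{b',c'\}$ --- a \emph{crossing} pairing, not the parallel pairing $\{a',b'\},\{c',d'\}$ you asserted. These two crossing paths combine with the $((a,b),(c,d))$ structure from~(i) and the four cut edges into a single Hamilton cycle of $(G\cdot H)-v$ (e.g.\ $a\,P_1\,b\,b'\,Q_2\,d'\,d\,P_2\,c\,c'\,Q_1\,a'\,a$). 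So this subcase is covered by~(i), again modulo the first omission about avoiding $ab,cd$. One small slip: in your non-hamiltonicity summary the contradiction is with $G$ or $H$ being \emph{non-hamiltonian}, not with their being snarks.
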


Cavicchioli et al.~\cite{CMRS03} point out the omissions in Theorem~\ref{theorem:fiorini}: in order for the proof to work, the given vertex pairs need to be good in the graph \emph{minus the edges} $ab, cd$. They restate the theorem and give a new proof.

\begin{theorem}[Cavicchioli et al., Theorem~3.2 in~\cite{CMRS03}]\label{theorem:cav}
Let $G$ be a hypohamiltonian snark having two independent edges $ab$ and $cd$ for which\\
(i) each of $(a,c), (a,d), (b,c), (b,d), ((a,b),(c,d))$ is good in $G - \{ ab, cd \}$;\\
(ii) for each vertex $v$, each of $(a,b), (c,d)$ is good in $G - \{ v, ab, cd \}$.\par
If $H$ is a hypohamiltonian snark with adjacent vertices $x$ and $y$, then the dot product $G \cdot H$ is also a hypohamiltonian snark, where $ab$ and $cd$ are deleted from $G$, $x$ and $y$ are deleted from $H$, and vertices $a, b, c, d$ are joined by edges to the neighbours of $x$ and $y$, respectively.
\end{theorem}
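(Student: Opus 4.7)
The plan is to establish in turn that $G \cdot H$ is a snark, that it is non-hamiltonian, and that removing any vertex leaves a hamiltonian graph. The snark properties (cubic, chromatic index~4, cyclically 4-edge-connected, girth at least~5) follow from the classical dot-product analysis once we know $G$ and $H$ are snarks, so I would only indicate them briefly.

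For non-hamiltonicity, I would suppose for a contradiction that $G \cdot H$ admits a hamiltonian cycle $C$. The four bridge edges $aa', bb', cc', dd'$ form an edge cut, so $C$ uses an even and non-zero number of them. If $C$ uses the two ``parallel'' bridges $\{aa', bb'\}$ (the case $\{cc', dd'\}$ being symmetric), its restriction to $V(G)$ is a hamiltonian $(a,b)$-path of $G - \{ab, cd\}$ and, together with the edge $ab$, a hamiltonian cycle of $G$---impossible. A ``mixed'' pair, say $\{aa', cc'\}$, gives a hamiltonian $(a', c')$-path of $H'$ which, closed by $a' - x - y - c'$, produces a hamiltonian cycle of $H$---also impossible. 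If $C$ uses all four bridges, the two $G$-arcs split $\{a,b,c,d\}$ into two pairs; the pairings $((a,c),(b,d))$ and $((a,d),(b,c))$ are ruled out, since in each case adding back the edges $ab$ and $cd$ stitches the two $G$-arcs into a hamiltonian cycle of $G$. The only remaining $G$-pairing is $((a,b),(c,d))$, and so the $H'$-pairing is forced to be one of the two ``diagonal'' ones $((a',c'),(b',d'))$ or $((a',d'),(b',c'))$ (otherwise $C$ would decompose into two smaller cycles); each of these closes via $x$ and $y$ into a hamiltonian cycle of $H$, a contradiction.

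For hypohamiltonicity, I would construct a hamiltonian cycle in $(G \cdot H) - w$ according to the location of $w$. If $w \in V(G)$, one uses the hamiltonian $(a, b)$-path in $G - \{w, ab, cd\}$ granted by (ii), together with a hamiltonian $(a', b')$-path in $H'$ obtained by deleting $x$ from the hamiltonian cycle of $H - y$ (this cycle must traverse both $xa'$ and $xb'$, since $x$ has degree~$2$ in $H - y$), and the two bridges $aa', bb'$. If $w \in V(H') \setminus \{a',b',c',d'\}$, I would fix a hamiltonian cycle $C'$ of $H - w$. When $xy \in C'$, deleting $x$ and $y$ from $C'$ yields a hamiltonian $(p, q)$-path of $H' - w$ with $p \in \{a', b'\}$ and $q \in \{c', d'\}$, and the corresponding path among $(a, c), (a, d), (b, c), (b, d)$ from (i) completes the cycle through two bridges. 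When $xy \notin C'$, deleting $x$ and $y$ yields two spanning paths of $H' - w$ with a diagonal endpoint pairing (the alternative parallel pairing would split $C'$ into two cycles), and the good pair $((a, b), (c, d))$ of (i) together with all four bridges does the job. Finally, if $w \in \{a', b', c', d'\}$, say $w = a'$, then in $H - a'$ the vertex $x$ has degree~$2$, forcing any hamiltonian cycle to use $xy$ and $xb'$ and, at $y$, one of $yc', yd'$; deleting $x, y$ gives a hamiltonian $(b', q)$-path for some $q \in \{c', d'\}$, which combines via two bridges with the $(b, c)$- or $(b, d)$-path from (i).

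The main obstacle is the 4-bridge argument appearing both in non-hamiltonicity and in the middle case of hypohamiltonicity: one must notice that the endpoint pairings on the two sides of the construction must be distinct to form a single cycle, and identify $((a,b),(c,d))$ as the unique $G$-pairing permitted by (i), so that it either closes up the forbidden cycle (in the bad case) or receives a diagonal $H'$-pairing (in the constructive case). Once this matching observation is made, the remainder is an enumeration across the bridge-count and pairing cases, pulling the appropriate good pair from (i) or (ii) at each step.
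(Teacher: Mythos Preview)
The paper does not prove this theorem; it merely quotes it from \cite{CMRS03} and then immediately criticises it, observing that condition~(ii) is \emph{unattainable} for $v \in \{a,b,c,d\}$ (a hamiltonian $(a,b)$-path cannot exist in $G-\{a,ab,cd\}$ since $a$ has been deleted). So as literally stated the theorem is vacuously true, and the paper moves on to its own weaker-hypothesis version (Theorem~\ref{theorem:gz}) rather than giving any argument here. There is thus no ``paper's own proof'' to compare against; what you have written is the standard dot-product argument that underlies all three versions, and your case analysis (bridge count, pairing parity, use of (i) for $w$ on the $H$-side and (ii) for $w$ on the $G$-side) is correct and complete in spirit.

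One small gap worth flagging, which is precisely the point the paper exploits: in your hypohamiltonicity step for $w\in V(G)$ you say you take the $(a,b)$-path guaranteed by (ii). For $w\in\{a,b\}$ this path does not exist; you would need to switch to the $(c,d)$-path (and pair it with a hamiltonian $(c',d')$-path of $H'$ coming from $H-x$). Condition (ii) in the Cavicchioli et al.\ formulation nominally provides both paths, so the fix is trivial \emph{provided} a graph satisfying (ii) exists --- but none does, which is exactly why the paper replaces ``each of'' by ``at least one of'' in Theorem~\ref{theorem:gz}. Apart from this, your argument is the right one and matches what the cited references do.
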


In above statements, the fact that the dot product of snarks is itself a snark had already been shown~\cite{AT73,Is75}, so indeed only the hypohamiltonicity was to be proven.

We point out that the hypotheses in Theorem~\ref{theorem:cav} are unattainable for $v \in \{ a,b,c,d \}$, which is tied to the fact that the requirements in (ii) are stronger than what is needed to prove the statement.

In~\cite{GZ}, the following (second) restatement of Theorem~\ref{theorem:fiorini} is given. Note that in \cite{GZ} the graphs are required to be cubic and below we do not state this requirement---we do however need the two vertices which are removed to be cubic. This allows us to use exactly the same proof as in~\cite{GZ}.

\begin{theorem}[Goedgebeur and Zamfirescu]\label{theorem:gz}
Let $G$ be a non-hamiltonian graph having two independent edges $ab$ and $cd$ for which\\
(i) each of $(a,c), (a,d), (b,c), (b,d), ((a,b),(c,d))$ is good in $G - \{ ab, cd \}$;\\
(ii) for each vertex $v$, at least one of $(a,b)$ and $(c,d)$ is good in $G - \{ v, ab, cd \}$.\par
If $H$ is a hypohamiltonian graph with cubic adjacent vertices $x$ and $y$, then the dot product $G \cdot H$ is also a hypohamiltonian graph, where $ab$ and $cd$ are deleted from $G$, $x$ and $y$ are deleted from $H$, and vertices $a, b, c, d$ are joined by edges to the neighbours of $x$ and $y$, respectively.\par
If $G$ and $H$ are planar, and $ab$ and $cd$ lie on the same facial cycle, then the dot product can be applied such that $G \cdot H$ is planar, as well. If $g$ ($h$) is the girth of $G$ ($H$), then the girth of $G \cdot H$ is at least $\min \{ g, h \}$. If $G$ and $H$ are cubic, then so is $G \cdot H.$
\end{theorem}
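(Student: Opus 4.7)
The plan is to establish the conclusion in four pieces: non-hamiltonicity of $G \cdot H$, hamiltonicity of $(G \cdot H) - v$ for every vertex $v$, planarity, and the girth and cubicity assertions. The common thread is the four-edge cut $T = \{aa', bb', cc', dd'\}$ that separates the $G$-side from the $H$-side of $G \cdot H$, so that any cycle of $G \cdot H$ must meet $T$ in an even number of edges.

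First I would show $G \cdot H$ is non-hamiltonian. Assume a Hamilton cycle $C$ exists and split on $|C \cap T| \in \{2,4\}$ (the value $0$ is impossible since $C$ visits both sides). If $|C \cap T| = 2$ with the two cut edges incident to $\{a,b\}$ or $\{c,d\}$, the $G$-side trace of $C$ is a Hamilton $(a,b)$-path (resp.\ $(c,d)$-path) in $G - \{ab, cd\}$ which closes through the edge $ab$ (resp.\ $cd$) into a Hamilton cycle of $G$, contradicting the hypothesis. If the two cut edges are ``mixed'', say $aa'$ and $cc'$, the $H$-side trace is a Hamilton $(a',c')$-path $P_H$ in $H - \{x,y\}$, and $a' - P_H - c' - y - x - a'$ is a Hamilton cycle of $H$, contradicting non-hamiltonicity of $H$. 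If $|C \cap T| = 4$, the $G$-side of $C$ splits into two disjoint spanning paths whose endpoints form a perfect matching of $\{a,b,c,d\}$: the matchings $\{ac, bd\}$ and $\{ad, bc\}$ close through the edges $ab$ and $cd$ into a single Hamilton cycle of $G$, whereas the matching $\{ab, cd\}$ forces the corresponding $H$-side 2-path to close through $x$ and $y$ into a Hamilton cycle of $H$.

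Next I would show $(G \cdot H) - v$ is hamiltonian for every vertex $v$, splitting on whether $v$ lies on the $G$-side or on the $H$-side. If $v \in V(G)$, condition (ii) supplies a Hamilton $(a,b)$- or $(c,d)$-path in $G - \{v, ab, cd\}$; on the $H$-side, cubicity of $x, y$ forces every Hamilton cycle of $H - y$ to use both $xa'$ and $xb'$, so deleting $x$ leaves a Hamilton $(a',b')$-path in $H - \{x,y\}$, and symmetrically $H - x$ yields a Hamilton $(c',d')$-path. Joining the chosen $G$- and $H$-side pieces via the corresponding two cut edges produces the required Hamilton cycle, and the cases $v \in \{a,b,c,d\}$ are identical since (ii) automatically forces the ``other'' pair. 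If $v \in V(H) \setminus \{x,y\}$, I pick any Hamilton cycle $C_H$ of $H - v$ (which exists by hypohamiltonicity of $H$) and split on whether $xy \in C_H$. When $xy \in C_H$, deleting $x, y$ leaves a single Hamilton path in $H - \{v, x, y\}$ between some $u' \in \{a',b'\}$ and some $w' \in \{c',d'\}$; the matching single-path condition in (i) (one of $(a,c),(a,d),(b,c),(b,d)$, under the identification $a' \leftrightarrow a$ etc.) provides the matching $G$-side path, and joining via the two cut edges $uu', ww'$ finishes. When $xy \notin C_H$, the edges of $C_H$ at $x$ are $xa', xb'$ and those at $y$ are $yc', yd'$, so $C_H - \{x, y\}$ is a pair of disjoint spanning paths from $\{a',b'\}$ to $\{c',d'\}$, and the 2-path condition $((a,b),(c,d))$ from (i) supplies the matching $G$-side pair, which together with all four cut edges closes into a Hamilton cycle (each of the two combinatorial matchings of $H$-side endpoints can be stitched up this way).

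The remaining properties are routine. Planarity follows from the standard face-adjacency argument: with $ab, cd$ on a common face of a planar embedding of $G$, removing them creates an open region in which a planar embedding of $H'$ (with the free half-edges at $a', b', c', d'$ facing outward) can be inserted so that the four cut edges avoid crossings. The girth bound comes from a short case analysis: a cycle $C$ of $G \cdot H$ either lies entirely in $G'$ or $H'$ (and then has length at least $g$ or at least $h$) or crosses $T$, in which case each $G$-side sub-arc between endpoints of $ab$ or $cd$ closes through that edge into a cycle of $G$, and each $H$-side sub-arc closes through $x$, $y$, or the edge $xy$ into a cycle of $H$, giving $|C| \ge \min\{g, h\}$. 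Cubicity is a direct degree count, as each of $a, b, c, d, a', b', c', d'$ loses exactly one edge (in $G'$ or $H'$) and regains exactly one cut edge, while every other vertex keeps its degree. The main technical obstacle is the case $v \in V(H) \setminus \{x, y\}$: one must verify that the five goodness conditions in (i) are jointly tailored to the two structurally distinct possibilities for how a Hamilton cycle of $H - v$ can behave around the deleted adjacent pair $xy$, with the four single-path conditions covering $xy \in C_H$ and the 2-path condition covering $xy \notin C_H$.
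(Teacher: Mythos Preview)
Your argument is correct and is exactly the standard cut-analysis proof: classify how a (hypothetical or desired) Hamilton cycle meets the $4$-edge cut $T=\{aa',bb',cc',dd'\}$, and in each case either close an arc through $ab$, $cd$, $x$, $y$, or $xy$ to produce a forbidden Hamilton cycle of $G$ or $H$, or conversely stitch together the pieces guaranteed by (i)/(ii) and by the hypohamiltonicity of $H$ (using that $x$ and $y$ become degree-$2$ vertices in $H-y$ and $H-x$). The paper does not give its own proof of this theorem; it simply cites \cite{GZ} and remarks that the same proof goes through verbatim once one only requires $x$ and $y$ (rather than all of $H$) to be cubic, so your write-up is in line with what the cited source does.

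One small remark on the girth paragraph: your clause about ``each $G$-side sub-arc between endpoints of $ab$ or $cd$'' only covers aligned $G$-arcs, but this is harmless because whenever a cycle crosses $T$ the $H$-side arcs alone already force length at least $h$ (each $H$-arc closes through $x$, $y$, or $xy$ to a cycle of $H$, hence has length $\ge h-3$, and adding the cut edges and the $\ge 1$-length $G$-arcs gives $|C|\ge h$). So the bound $\min\{g,h\}$ follows without ever needing to close a mixed $G$-arc.
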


Note that the fact that $G$ is non-hamiltonian together with condition (ii) implies that $G$ must be hypohamiltonian.

In the following, we will call the pair of edges $ab, cd$ from the statement of Theorem~\ref{theorem:gz} \emph{suitable}. The Petersen graph is the smallest snark, and the two Blanu\v{s}a snarks on 18~vertices are the second-smallest snarks. All three graphs are also hypohamiltonian. Due the huge automorphism group of the Petersen graph, it can be verified by hand that it does not contain a pair of suitable edges. Although both Blanu\v{s}a snarks are dot products of two Petersen graphs, the Petersen graph does not contain a pair of suitable edges. Thus, in a certain sense, Theorem~\ref{theorem:gz} is \emph{not} ``if and only if'', i.e.\ there exist dot products whose factors do not contain suitable edges.

Let us end this section with a remark which may prove to be useful in other applications. Throughout its statement and proof, we use the notation from Theorem~\ref{theorem:gz}.

\begin{observation}
We have that $G \cdot H + ab$, $G \cdot H + cd$, and $G \cdot H + ab + cd$ are hypohamiltonian, as well.
\end{observation}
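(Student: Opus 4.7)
The plan is to reduce to the strongest claim, namely that $G \cdot H + ab + cd$ is non-Hamiltonian, and then to reuse the case analysis from the proof of Theorem~\ref{theorem:gz}. The vertex-deletion condition is essentially free: by Theorem~\ref{theorem:gz}, $(G \cdot H) - v$ is Hamiltonian for every vertex $v$, and that same Hamiltonian cycle is also a Hamiltonian cycle of $G \cdot H + ab - v$, $G \cdot H + cd - v$, and $G \cdot H + ab + cd - v$, all of which are spanning supergraphs of $(G \cdot H) - v$. Conversely, $G \cdot H + ab$ and $G \cdot H + cd$ are spanning subgraphs of $G \cdot H + ab + cd$, so non-Hamiltonicity of the latter implies non-Hamiltonicity of the former two.

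Suppose for contradiction that $C$ is a Hamiltonian cycle of $G \cdot H + ab + cd$ and consider the four-edge cut $F = \{aa', bb', cc', dd'\}$ separating $V(G)$ from $V(H')$. Since $C$ crosses $F$ an even number of times and visits both sides, it uses exactly two or four edges of $F$, restricting on each side to a Hamiltonian path or a pair of vertex-disjoint spanning paths whose $V(G)$-side edges lie in $G + ab + cd = G$ and whose $V(H')$-side edges lie in $H'$. When $C$ uses two cut edges, either the two $V(H')$-endpoints mix across $\{a', b'\}$ and $\{c', d'\}$, in which case combining the $H'$-path with the edges at $x$ and $y$ (notably $xy$) yields a Hamiltonian cycle of $H$, contradicting hypohamiltonicity of $H$; or the $V(G)$-path is a Hamiltonian path of $G$ from $a$ to $b$ or from $c$ to $d$, and since $|V(G)| \ge 6$ such a path cannot contain the chord $ab$ (respectively $cd$), so closing with that chord gives a Hamiltonian cycle of $G$, contradicting hypohamiltonicity of $G$.

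When $C$ uses all four cut edges, each side yields a pair of vertex-disjoint spanning paths whose endpoints partition $\{a, b, c, d\}$, respectively $\{a', b', c', d'\}$, into two pairs; a short parity check on the contracted 8-cycle forces the two pairings to differ, otherwise $C$ would split into two disjoint cycles. In each remaining sub-case, either the $H'$-side pairing is mixed, which as above gives a Hamiltonian cycle of $H$, or the $V(G)$-side paths pair $a$ with $c$ and $b$ with $d$ (or $a$ with $d$ and $b$ with $c$), and joining them through the edges $ab$ and $cd$ yields a Hamiltonian cycle of $G$.

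The main obstacle compared with the proof of Theorem~\ref{theorem:gz} is that the $V(G)$-side paths now live in $G$ itself rather than in $G - \{ab, cd\}$, so a priori they might already contain the edges $ab$ or $cd$ that we want to use for closing. This is handled by the elementary observation that, whenever the $V(G)$-side pairing is $\{a,c\}$-$\{b,d\}$ or $\{a,d\}$-$\{b,c\}$, each of $a, b, c, d$ is an endpoint of precisely one of the two vertex-disjoint paths, so both endpoints of $ab$ lie in different paths, and likewise for $cd$; hence neither chord is consumed by the paths and both are genuinely available to close a single Hamiltonian cycle of $G$. Once this point is verified, the remainder is a routine case-by-case adaptation of the non-Hamiltonicity half of the proof of Theorem~\ref{theorem:gz}.
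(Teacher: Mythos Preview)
Your proof is correct. The chief difference from the paper's argument lies in how the non-Hamiltonicity of $G \cdot H + ab + cd$ is obtained. The paper first invokes the non-Hamiltonicity of $G \cdot H$ (already established in Theorem~\ref{theorem:gz}) to deduce that any Hamiltonian cycle $\mathfrak{h}$ of $G \cdot H + ab + cd$ must use at least one of the added chords, say $ab$; it then performs a short three-way case split on which of $aa',bb'$ lie in $\mathfrak{h}$, and in each case the assumption $ab \in \mathfrak{h}$ immediately pins down the structure on the $G$-side and produces a Hamiltonian cycle of $H$ or of $G$. You instead run the full cut analysis from scratch (two versus four crossing edges, then pairing sub-cases) without ever assuming that $ab$ or $cd$ is used, and deal with the potential presence of these chords in the $V(G)$-side paths by the endpoint argument in your final paragraph. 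Both routes work; the paper's is more economical because it reuses Theorem~\ref{theorem:gz} twice (once for vertex deletion, once for the ``w.l.o.g.\ $ab \in \mathfrak{h}$'' step), whereas yours is more self-contained and in effect re-derives the non-Hamiltonicity of $G \cdot H$ as the degenerate case in which neither added chord appears.
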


\begin{proof}
Put $N(x) = \{ a', b', y \}$ and $N(y) = \{ c', d', x \}$ such that the unique neighbour of $a'$ ($b'$, $c'$, $d'$) in $G$ is $a$ ($b$, $c$, $d$). Assume $G \cdot H + ab + cd$ contains a hamiltonian cycle ${\mathfrak h}$. Thus, at least one of $ab$ and $cd$ lies in ${\mathfrak h}$, say $ab$. We treat $H - \{ x,y \}$ as a subgraph of $G \cdot H$. If $aa', bb' \in E({\mathfrak h})$, then ${\mathfrak h} \cap H \cup a'xb' \cup c'yd'$ gives a hamiltonian cycle in $H$, a contradiction. If $aa', bb' \notin E({\mathfrak h})$, then the cycle ${\mathfrak h} \cap G + cd$ yields a contradiction. So w.l.o.g.\ $aa \in E({\mathfrak h})$ and $bb' \notin E({\mathfrak h})$. This implies the existence of a hamiltonian path in $H - \{ x,y \}$ with end-vertices $a'$ and $u \in \{ c', d' \}$. But this path together with $uyxa'$ is a hamiltonian cycle in $H$, a contradiction. It follows that $G \cdot H + ab$ and $G \cdot H + cd$ are non-hamiltonian, as well.
\end{proof}

\section{On a Theorem of Steffen on Hypohamiltonian Snarks}
\label{sect:theorem_steffen}

\subsection{Rectifying Steffen's proof}
\label{subsect:fix_steffen}

A snark is \emph{irreducible} if the removal of every edge-cut which is not the set of all edges incident with a vertex yields a 3-edge-colourable graph. Steffen's article~\cite{St01} is motivated by the following problem.

\begin{problem}[Nedela and \v{S}koviera~\cite{NS96}]\label{problem:NS}
For which even number $n \ge 10$ does there exist an irreducible snark of order~$n$? In particular, does there exist an irreducible snark of each sufficiently large order?
\end{problem}

Steffen settled the second question of Problem~\ref{problem:NS} by giving the following main result from~\cite{St01}.

\begin{theorem}[Steffen, Theorem 2.5 in~\cite{St01}]\label{Steffen}
There is a hypohamiltonian snark of order $n$\\
(1) for each $n \in \{ m : m \ge 64 \ and \ m \equiv 0 \ {\rm mod} \ 8 \}$\\
(2) for each $n \in \{ 10, 18 \} \cup \{ m : m \ge 98 \ and \ m \equiv 2 \ {\rm mod} \ 8 \}$\\
(3) for each $n \in \{ m : m \ge 20 \ and \ m \equiv 4 \ {\rm mod} \ 8 \}$\\
(4) for each $n \in \{ 30 \} \cup \{ m : m \ge 54 \ and \ m \equiv 6 \ {\rm mod} \ 8 \}$\\
(5) for each even $n \ge 92$.
\end{theorem}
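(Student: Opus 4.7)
The plan is to iterate the dot-product construction of Theorem~\ref{theorem:gz}, using flower snarks as the factor that supplies the ``suitable edges''. The core of the argument is the technical lemma on flower snarks alluded to in the introduction---the rectified replacement for Steffen's original argument---asserting that, for every odd $k$ in an appropriate range, $J_k$ is a hypohamiltonian snark containing a pair of edges suitable in the sense of Theorem~\ref{theorem:gz}, so that $J_k \cdot H$ is a hypohamiltonian snark for any hypohamiltonian snark $H$ (and, when needed to iterate, so is any further dot product obtained in this way). I would prove the lemma by fixing specific independent edges $ab$ and $cd$ in $J_k$ and then exhibiting hamiltonian paths and spanning path-pairs in $J_k - \{ab,cd\}$ and in $J_k - \{v, ab, cd\}$ that witness conditions (i) and (ii) of Theorem~\ref{theorem:gz} for every vertex $v$. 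The high rotational symmetry of $J_k$ cuts the case analysis down to a manageable size, and this is where the genuine work of the section lies.

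Granted the lemma, the theorem reduces to an arithmetic bookkeeping argument combining a small stock of hypohamiltonian snarks with iterated dot products. For the base cases I would take the Petersen graph ($n = 10$), a Blanu\v{s}a snark ($n = 18$), the flower snarks $J_5, J_7, J_9, \ldots$ (which give $n = 20, 28, 36, \ldots$), and an explicit hypohamiltonian snark of order $30$ (obtained e.g.\ from a single suitably chosen dot product, or from the known literature). These supply the small orders prescribed in cases~(2)--(4) directly.

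For larger orders I would iterate dot products with flower snarks. Since $|V(J_k)| = 4k \equiv 4 \pmod{8}$ for odd $k$, each application of such a dot product raises the order by $4k - 2$, which is $\equiv 2 \pmod{8}$. So after $j$ successive flower-snark dot products applied to a starting snark of order $\equiv r \pmod{8}$ one reaches order $\equiv r + 2j \pmod{8}$; varying the starting graph among Petersen, Blanu\v{s}a, and the flower snarks then exhausts all four even residue classes. Within each residue class, the freedom to pick $k_1, \ldots, k_j$ among the admissible odd integers yields increments $18, 26, 34, \ldots$ whose sums fill every sufficiently large integer in that class. A direct verification of these sums against the prescribed thresholds produces (1)--(4) exactly, and (5) is their union.

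The main obstacle is the technical lemma itself: it is precisely the step whose proof Steffen obtained from the flawed Theorem~\ref{theorem:fiorini}, so the task is to replace his argument by a direct, self-contained one. The required hamiltonian paths and spanning path-pairs have to be exhibited explicitly, and suitability has to be checked for every deleted vertex~$v$, which amounts to a careful case analysis exploiting the cyclic automorphism group of $J_k$. Once that lemma is in place, the residue-class arithmetic and the construction of hypohamiltonian snarks of every prescribed order are routine.
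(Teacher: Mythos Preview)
Your overall plan matches the paper's: the theorem is recovered from Steffen's original residue-class bookkeeping once the technical lemma on flower snarks is repaired, and that lemma is proved by exhibiting explicit hamiltonian paths and spanning path-pairs witnessing conditions~(i) and~(ii) of Theorem~\ref{theorem:gz}.

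Two points where your sketch diverges from what the paper actually does. First, the lemma is proved only for $J_9$, $J_{11}$, and $J_{13}$ (Lemma~\ref{lem:fixed}), not for a general range of odd~$k$; these three suffice for Steffen's arithmetic, and the increments they give are $34$, $42$, $50$, not $18, 26, 34, \ldots$. Your implicit use of $J_5$ and $J_7$ as factors with suitable edges is not established and is not needed. Second, your hope that the rotational symmetry of $J_k$ reduces the case analysis is over-optimistic: once the two distinguished edges $b_0c_0$ and $b_4c_4$ are fixed, almost none of the cyclic symmetry survives, and condition~(ii) genuinely has to be checked vertex by vertex. The paper does exactly this, listing the required hamiltonian paths for every $v \in V(J_k)$ (computer-generated, then verified); the appendix contains the full tables. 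So the ``manageable size'' you anticipate is in fact a brute-force enumeration over all $36 + 44 + 52$ vertices, not a handful of orbit representatives.
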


Isaacs' \emph{flower snark} $J_{2k+1}$, see~\cite{Is75}, is the graph $$\left( \{ a_i, b_i, c_i, d_i \}_{i = 0}^{2k}, \{ b_ia_i, b_ic_i, b_id_i, a_ia_{i+1}, c_id_{i+1}, d_ic_{i+1} \}_{i = 0}^{2k} \right),$$ where addition in the indices is performed modulo $2k + 1$.

However, the proof of~\cite[Lemma 2.3]{St01}, which is essential for the proof of the theorem, is erroneous, since it uses Theorem~\ref{theorem:fiorini} (and it does not, by sheer coincidence, work with Theorem~\ref{theorem:gz}). We here give a correct proof of that lemma.

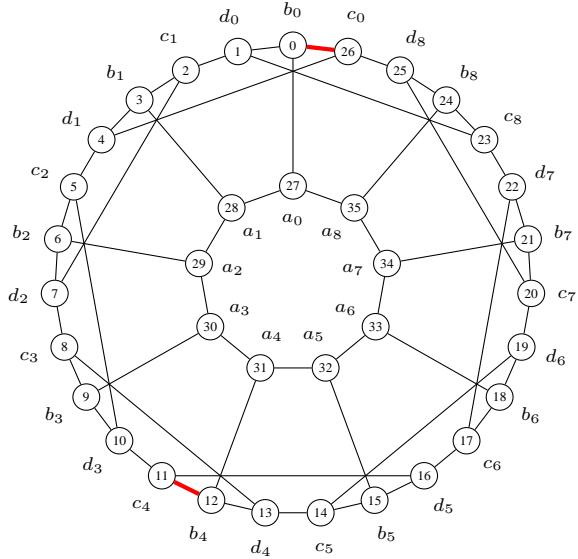
\begin{figure}[h!t]
	\centering

\begin{tikzpicture}[every node/.style={circle, draw, inner sep=1pt,
minimum size=10pt},every edge/.style={draw},scale=0.5]
    \node (0) at (0.000000000000000,6.25000000000000) {\tiny 0};
    \node (1) at (-1.44134919214025,6.08153044112390) {\tiny 1};
    \node (2) at (-2.80499487625289,5.58520400202133) {\tiny 2};
    \node (3) at (-4.01742256054087,4.78777776949361) {\tiny 3};
    \node (4) at (-5.01326995471902,3.73224119814241) {\tiny 4};
    \node (5) at (-5.73885066800171,2.47549853774473) {\tiny 5};
    \node (6) at (-6.15504845632630,1.08530111041832) {\tiny 6};
    \node (7) at (-6.23942598919543,-0.363405180690475) {\tiny
7};
    \node (8) at (-5.98743445197181,-1.79252020444431) {\tiny
8};
    \node (9) at (-5.41265877365274,-3.12500000000000) {\tiny
9};
    \node (10) at (-4.54608525983156,-4.28901023667958) {\tiny
10};
    \node (11) at (-3.43443111294254,-5.22179882133085) {\tiny
11};
    \node (12) at (-2.13762589578543,-5.87307887991193) {\tiny
12};
    \node (13) at (-0.725580713282687,-6.20773973588714) {\tiny
13};
    \node (14) at (0.725580713282685,-6.20773973588714) {\tiny
14};
    \node (15) at (2.13762589578543,-5.87307887991193) {\tiny
15};
    \node (16) at (3.43443111294253,-5.22179882133085) {\tiny
16};
    \node (17) at (4.54608525983156,-4.28901023667958) {\tiny
17};
    \node (18) at (5.41265877365274,-3.12500000000000) {\tiny
18};
    \node (19) at (5.98743445197180,-1.79252020444432) {\tiny
19};
    \node (20) at (6.23942598919543,-0.363405180690477) {\tiny
20};
    \node (21) at (6.15504845632630,1.08530111041831) {\tiny
21};
    \node (22) at (5.73885066800171,2.47549853774473) {\tiny
22};
    \node (23) at (5.01326995471902,3.73224119814241) {\tiny
23};
    \node (24) at (4.01742256054087,4.78777776949361) {\tiny
24};
    \node (25) at (2.80499487625290,5.58520400202132) {\tiny
25};
    \node (26) at (1.44134919214025,6.08153044112390) {\tiny
26};
    \node (27) at (0.000000000000000,2.50000000000000) {\tiny
27};
    \node (28) at (-1.60696902421635,1.91511110779744) {\tiny
28};
    \node (29) at (-2.46201938253052,0.434120444167327) {\tiny
29};
    \node (30) at (-2.16506350946110,-1.25000000000000) {\tiny
30};
    \node (31) at (-0.855050358314169,-2.34923155196477) {\tiny
31};
    \node (32) at (0.855050358314165,-2.34923155196477) {\tiny
32};
    \node (33) at (2.16506350946110,-1.25000000000000) {\tiny
33};
    \node (34) at (2.46201938253052,0.434120444167323) {\tiny
34};
    \node (35) at (1.60696902421635,1.91511110779745) {\tiny
35};
    \path (0) edge (1);
    \path[ultra thick, red] (0) edge (26);
    \path (0) edge (27);
    \path (1) edge (2);
    \path (1) edge (23);
    \path (2) edge (3);
    \path (2) edge (7);
    \path (3) edge (4);
    \path (3) edge (28);
    \path (4) edge (5);
    \path (4) edge (26);
    \path (5) edge (6);
    \path (5) edge (10);
    \path (6) edge (7);
    \path (6) edge (29);
    \path (7) edge (8);
    \path (8) edge (9);
    \path (8) edge (13);
    \path (9) edge (10);
    \path (9) edge (30);
    \path (10) edge (11);
    \path[ultra thick, red] (11) edge (12);
    \path (11) edge (16);
    \path (12) edge (13);
    \path (12) edge (31);
    \path (13) edge (14);
    \path (14) edge (15);
    \path (14) edge (19);
    \path (15) edge (16);
    \path (15) edge (32);
    \path (16) edge (17);
    \path (17) edge (18);
    \path (17) edge (22);
    \path (18) edge (19);
    \path (18) edge (33);
    \path (19) edge (20);
    \path (20) edge (21);
    \path (20) edge (25);
    \path (21) edge (22);
    \path (21) edge (34);
    \path (22) edge (23);
    \path (23) edge (24);
    \path (24) edge (25);
    \path (24) edge (35);
    \path (25) edge (26);
    \path (27) edge (28);
    \path (27) edge (35);
    \path (28) edge (29);
    \path (29) edge (30);
    \path (30) edge (31);
    \path (31) edge (32);
    \path (32) edge (33);
    \path (33) edge (34);
    \path (34) edge (35);

    \foreach \x in {0,...,8} {
        \pgfmathsetmacro{\angle}{90+360*\x/9}
    	\node[draw=none] (b\x) at (\angle:7.2) {\scriptsize $b_{\x}$};
    }
    \foreach \x in {0,...,8} {
        \pgfmathsetmacro{\angle}{90+360*\x/9 + 360/27}
    	\node[draw=none] (b\x) at (\angle:7.2) {\scriptsize $d_{\x}$};
    }
    \foreach \x in {0,...,8} {
        \pgfmathsetmacro{\angle}{90+360*\x/9 - 360/27}
    	\node[draw=none] (b\x) at (\angle:7.2) {\scriptsize $c_{\x}$};
    }

    \foreach \x in {0,...,8} {
        \pgfmathsetmacro{\angle}{90+360*\x/9}
    	\node[draw=none] (b\x) at (\angle:1.6) {\scriptsize $a_{\x}$};
    }

\end{tikzpicture}

    \caption{The flower snark $J_{9}$. The suitable edges $b_0c_0$ and $b_4c_4$ are marked in bold red.}
    \label{fig:J9}
\end{figure}

\begin{lemma}[Steffen, Lemma 2.3 in~\cite{St01}]\label{lem:fixed}
The flower snarks $J_9$, $J_{11}$, and $J_{13}$ satisfy the conditions of Theorem~\ref{theorem:gz}.
\end{lemma}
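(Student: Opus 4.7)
The plan is to exploit the dihedral symmetry of Isaacs' flower snarks and then argue by explicit construction. For each $N\in\{9,11,13\}$ I would choose the two independent ``spoke'' edges
$$ab := b_0c_0,\qquad cd := b_{\lfloor N/2\rfloor}c_{\lfloor N/2\rfloor},$$
which for $J_9$ are the bold red edges in Figure~\ref{fig:J9}. The non-hamiltonicity of $J_N$ required by Theorem~\ref{theorem:gz} is classical, since Isaacs~\cite{Is75} proved the flower snarks non-hamiltonian (in fact, as recalled in the introduction, hypohamiltonian), so no further work is needed on this point.

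For condition (i), I would exhibit the five spanning subgraphs of $J_N-\{ab,cd\}$ directly. Each is described by a single ``zigzag'' walk that traverses the inner rim $a_0 a_1\cdots a_{2k}$ together with the outer braid on $c_i,d_i$, changing layer through the surviving spokes $b_ia_i$ and $b_id_i$ with $i\notin\{0,\lfloor N/2\rfloor\}$. Because the two deleted edges split the hub sequence $b_0,\ldots,b_{2k}$ symmetrically into two arcs, each of the four hamiltonian paths (for endpoint pairs $(a,c),(a,d),(b,c),(b,d)$) and the one spanning pair of disjoint paths (for $((a,b),(c,d))$) is fully specified by listing the spokes used to switch layers; a short table per $N$ collects all five walks.

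For condition (ii), the stabiliser of the unordered pair $\{ab,cd\}$ in the dihedral symmetry group of $J_N$ is the single reflection that swaps $b_0$ with $b_{\lfloor N/2\rfloor}$, so the $4N$ vertices of $J_N$ fall into roughly $2N$ orbits. For each orbit representative $v$, I would exhibit a hamiltonian path in $J_N-\{v,ab,cd\}$ joining either $a$ to $b$ or $c$ to $d$. These paths can be built from the spanning pair of $(a,b)$- and $(c,d)$-paths produced in part (i) by a local detour that absorbs the neighbours of $v$ into one of the two component paths while leaving the other intact. The shape of the detour depends only on the type of $v$ (one of $a_i,b_i,c_i,d_i$) and on which of the two arcs it lies in, so the cases can be grouped into a handful of families per graph.

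The main obstacle is simply the volume of (ii): even after the symmetry reduction, roughly twenty representatives per graph remain, each requiring its own explicit hamiltonian path, and the detours are qualitatively different across the four vertex types. To keep the proof tractable I would organise the argument around a single ``master'' spanning pair of paths drawn from (i), describe every detour as a minimal local modification of this master, and, as a safety net, confirm the full case analysis against the independent computer enumeration that is in any event needed for Section~\ref{sect:conjecture_steffen}.
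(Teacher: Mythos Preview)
Your plan is essentially the same as the paper's: fix two spoke edges, produce the five spanning objects for condition~(i) explicitly, and then verify condition~(ii) vertex by vertex, ultimately leaning on computer-found hamiltonian paths. The paper does exactly this, with no structural shortcuts: it simply lists one certifying path per vertex (all $4N$ of them, in the Appendix), and verifies each by inspection. Two points of divergence are worth flagging. First, the paper uses $b_0c_0$ and $b_4c_4$ for \emph{all three} graphs, whereas your formula $b_{\lfloor N/2\rfloor}c_{\lfloor N/2\rfloor}$ gives $b_5c_5$ for $J_{11}$ and $b_6c_6$ for $J_{13}$; the paper makes no claim about these alternative pairs, so you would need to verify them from scratch. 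Second, your reflection-symmetry reduction is a genuine improvement over the paper's brute force (it roughly halves the case list), and it is valid: the map $i\mapsto m-i$ (with $m$ the second index) is an automorphism of $J_N$ preserving the $c$/$d$ layers and swapping the two chosen edges.

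Where your proposal is weakest is the ``local detour'' mechanism for~(ii). As written it does not type-check: condition~(ii) asks for a \emph{single} hamiltonian path in $J_N-\{v,ab,cd\}$ between $a,b$ or between $c,d$, so you cannot ``leave the other component path intact''---the two paths from~(i) must be merged into one while dropping $v$, which is a global rather than local modification. In practice this heuristic will not produce the needed paths uniformly, and the paper's experience confirms this: Steffen's original proof failed precisely because naturally-built paths quietly reused one of the deleted edges, and the repaired proof abandons any uniform recipe in favour of a raw computer search. Your fallback to computer verification is therefore not a safety net but the actual proof, exactly as in the paper. (A minor aside: the non-hamiltonicity of the flower snarks is not due to Isaacs~\cite{Is75}; Isaacs showed they are snarks, while their hypohamiltonicity is due to Fiorini~\cite{Fi83} and, independently, is implicit in Gutt~\cite{Gu77}.)
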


\begin{proof} In \cite{St01}, in each of the graphs $J_9$, $J_{11}$, and $J_{13}$, the suitable edges were chosen to be $b_0c_0$ and $b_4c_4$. However, in~\cite{St01}, for various vertices $v$, the hamiltonian paths did not satisfy condition~(ii) from Theorem~\ref{theorem:gz}, as the paths used one of the edges $b_0c_0$ or $b_4c_4$. This was for instance the case for $v \in \{a_0,a_8,d_0\}$ in $J_9$, for $v \in \{a_0,a_{10},d_0\}$ in $J_{11}$ and for $v \in \{a_0,c_1,c_{12},d_0\}$ in $J_{13}$, see Claims 6, 7, and 8 in the Appendix of \cite{St01}.

We will now prove that $b_0c_0$ and $b_4c_4$ are indeed suitable edges for Theorem~\ref{theorem:gz} for $J_9$, $J_{11}$, and $J_{13}$. For $J_9$ the proof is given below (and partially in the Appendix), while the technical details of the proofs for $J_{11}$ and $J_{13}$ can be found in the Appendix. The mapping between the $a_i,b_i,c_i,d_i$ (used by Steffen) and the vertex numbers used in the proof is shown in Figures~\ref{fig:J9}--\ref{fig:J13}. We use numbers as labels in the proof to make it easier to read these graphs using a computer for verifying the results.

\medskip

\noindent
\textbf{Proof that $b_0c_0$ and $b_4c_4$ are suitable edges for $J_9$}.

Figure~\ref{fig:J9} shows the flower snark $J_9$. In $J_9$, $b_0c_0$ and $b_4c_4$ correspond to the edges $(0,26)$ and $(11, 12)$, respectively.

$(0,11), (0,12), (26,11)$ and $(26,12)$ are good in $J_9 - \{(0,26),(11,12)\}$ due to the following hamiltonian paths, respectively:

\begin{itemize}
{\small
\item 11, 10, 5, 6, 7, 8, 9, 30, 29, 28, 27, 35, 24, 23, 22, 17, 16, 15, 32, 31, 12, 13, 14, 19, 18, 33, 34, 21, 20, 25, 26, 4, 3, 2, 1, 0
\item 12, 13, 14, 15, 16, 11, 10, 5, 4, 26, 25, 20, 19, 18, 17, 22, 21, 34, 33, 32, 31, 30, 9, 8, 7, 6, 29, 28, 3, 2, 1, 23, 24, 35, 27, 0
\item 11, 10, 5, 4, 3, 2, 1, 0, 27, 28, 29, 6, 7, 8, 9, 30, 31, 12, 13, 14, 19, 18, 17, 16, 15, 32, 33, 34, 35, 24, 23, 22, 21, 20, 25, 26
\item 12, 13, 8, 7, 2, 3, 4, 5, 6, 29, 28, 27, 0, 1, 23, 22, 17, 18, 33, 32, 31, 30, 9, 10, 11, 16, 15, 14, 19, 20, 21, 34, 35, 24, 25, 26}
\end{itemize}


$((0,26),(11,12))$ is good in $J_9 - \{(0,26),(11,12)\}$ due to the following two disjoint paths with end-vertices $0$ and $26$, and $11$ and $12$, respectively, which together span $J_9$.

\begin{itemize}
{\small
\item 26, 25, 20, 19, 14, 13, 8, 7, 2, 1, 0
\item 12, 31, 32, 15, 16, 17, 18, 33, 34, 21, 22, 23, 24, 35, 27, 28, 3, 4, 5, 6, 29, 30, 9, 10, 11}
\end{itemize}

We showed by computer that at least one of $(0,26)$ or $(11,12)$ is good in $J_9 - \{v,(0,26),$ $(11,12)\}$ for every $v \in V(J_9)$. In each case we verified that the path found by the computer is indeed a valid hamiltonian path in the graph. Below we explicitly show this for $v = 0$. The hamiltonian paths for the other values of $v$ can be found in the Appendix.\\
{\small
\noindent $v = 0$: 12, 13, 14, 15, 32, 31, 30, 29, 6, 5, 10, 9, 8, 7, 2, 1, 23, 24, 25, 26, 4, 3, 28, 27, 35, 34, 33, 18, 19, 20, 21, 22, 17, 16, 11}
\end{proof}

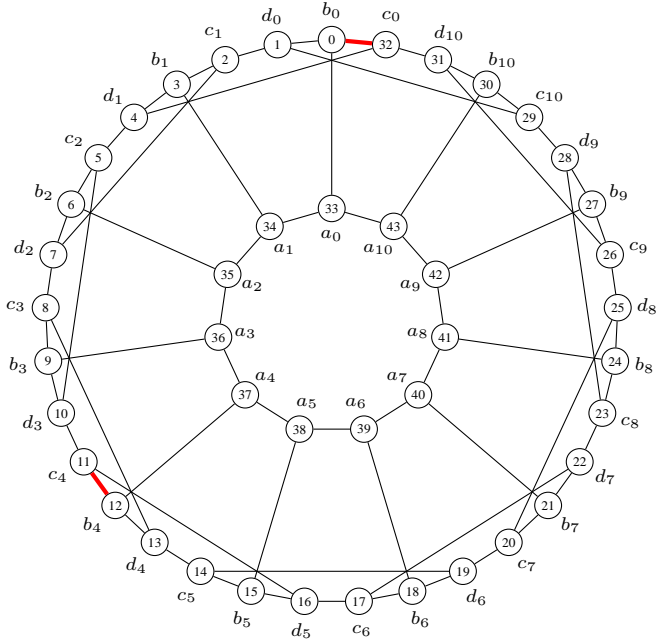
\begin{figure}[h!t]
	\centering

\begin{tikzpicture}[every node/.style={circle, draw, inner sep=1pt,
minimum size=10pt},every edge/.style={draw},scale=0.6]
    \node (0) at (0.000000000000000,6.25000000000000) {\tiny 0};
    \node (1) at (-1.18282027725256,6.13705435789192) {\tiny 1};
    \node (2) at (-2.32289034787705,5.80229958135045) {\tiny 2};
    \node (3) at (-3.37900510909748,5.25783458019488) {\tiny 3};
    \node (4) at (-4.31299382176320,4.52333773815669) {\tiny 4};
    \node (5) at (-5.09109970031460,3.62535568481999) {\tiny 5};
    \node (6) at (-5.68519997096574,2.59634383126179) {\tiny 6};
    \node (7) at (-6.07382230202214,1.47349334693392) {\tiny 7};
    \node (8) at (-6.24292086989380,0.297386973898389) {\tiny 8};
    \node (9) at (-6.18638401175583,-0.889467739208031) {\tiny 9};
    \node (10) at (-5.90625511696668,-2.04417477073388) {\tiny 10};
    \node (11) at (-5.41265877365274,-3.12500000000000) {\tiny 11};
    \node (12) at (-4.72343483971411,-4.09287958715803) {\tiny 12};
    \node (13) at (-3.86349366387878,-4.91283184214242) {\tiny 13};
    \node (14) at (-2.86391576079632,-5.55522155409327) {\tiny 14};
    \node (15) at (-1.76082848025894,-5.99683108509061) {\tiny 15};
    \node (16) at (-0.594100270651141,-6.22169951608178) {\tiny 16};
    \node (17) at (0.594100270651138,-6.22169951608178) {\tiny 17};
    \node (18) at (1.76082848025893,-5.99683108509061) {\tiny 18};
    \node (19) at (2.86391576079631,-5.55522155409327) {\tiny 19};
    \node (20) at (3.86349366387878,-4.91283184214242) {\tiny 20};
    \node (21) at (4.72343483971411,-4.09287958715804) {\tiny 21};
    \node (22) at (5.41265877365274,-3.12500000000000) {\tiny 22};
    \node (23) at (5.90625511696668,-2.04417477073389) {\tiny 23};
    \node (24) at (6.18638401175583,-0.889467739208033) {\tiny 24};
    \node (25) at (6.24292086989380,0.297386973898390) {\tiny 25};
    \node (26) at (6.07382230202214,1.47349334693392) {\tiny 26};
    \node (27) at (5.68519997096574,2.59634383126179) {\tiny 27};
    \node (28) at (5.09109970031460,3.62535568481998) {\tiny 28};
    \node (29) at (4.31299382176320,4.52333773815669) {\tiny 29};
    \node (30) at (3.37900510909749,5.25783458019488) {\tiny 30};
    \node (31) at (2.32289034787705,5.80229958135045) {\tiny 31};
    \node (32) at (1.18282027725257,6.13705435789192) {\tiny 32};
    \node (33) at (0.000000000000000,2.50000000000000) {\tiny 33};
    \node (34) at (-1.35160204363899,2.10313383207796) {\tiny 34};
    \node (35) at (-2.27407998838629,1.03853753250472) {\tiny 35};
    \node (36) at (-2.47455360470233,-0.355787095683213) {\tiny 36};
    \node (37) at (-1.88937393588565,-1.63715183486321) {\tiny 37};
    \node (38) at (-0.704331392103576,-2.39873243403624) {\tiny 38};
    \node (39) at (0.704331392103572,-2.39873243403624) {\tiny 39};
    \node (40) at (1.88937393588564,-1.63715183486322) {\tiny 40};
    \node (41) at (2.47455360470233,-0.355787095683217) {\tiny 41};
    \node (42) at (2.27407998838629,1.03853753250472) {\tiny 42};
    \node (43) at (1.35160204363899,2.10313383207795) {\tiny 43};
    \path (0) edge (1);
    \path[ultra thick, red] (0) edge (32);
    \path (0) edge (33);
    \path (1) edge (2);
    \path (1) edge (29);
    \path (2) edge (3);
    \path (2) edge (7);
    \path (3) edge (4);
    \path (3) edge (34);
    \path (4) edge (5);
    \path (4) edge (32);
    \path (5) edge (6);
    \path (5) edge (10);
    \path (6) edge (7);
    \path (6) edge (35);
    \path (7) edge (8);
    \path (8) edge (9);
    \path (8) edge (13);
    \path (9) edge (10);
    \path (9) edge (36);
    \path (10) edge (11);
    \path[ultra thick, red] (11) edge (12);
    \path (11) edge (16);
    \path (12) edge (13);
    \path (12) edge (37);
    \path (13) edge (14);
    \path (14) edge (15);
    \path (14) edge (19);
    \path (15) edge (16);
    \path (15) edge (38);
    \path (16) edge (17);
    \path (17) edge (18);
    \path (17) edge (22);
    \path (18) edge (19);
    \path (18) edge (39);
    \path (19) edge (20);
    \path (20) edge (21);
    \path (20) edge (25);
    \path (21) edge (22);
    \path (21) edge (40);
    \path (22) edge (23);
    \path (23) edge (24);
    \path (23) edge (28);
    \path (24) edge (25);
    \path (24) edge (41);
    \path (25) edge (26);
    \path (26) edge (27);
    \path (26) edge (31);
    \path (27) edge (28);
    \path (27) edge (42);
    \path (28) edge (29);
    \path (29) edge (30);
    \path (30) edge (31);
    \path (30) edge (43);
    \path (31) edge (32);
    \path (33) edge (34);
    \path (33) edge (43);
    \path (34) edge (35);
    \path (35) edge (36);
    \path (36) edge (37);
    \path (37) edge (38);
    \path (38) edge (39);
    \path (39) edge (40);
    \path (40) edge (41);
    \path (41) edge (42);
    \path (42) edge (43);

    \foreach \x in {0,...,10} {
        \pgfmathsetmacro{\angle}{90+360*\x/11}
    	\node[draw=none] (b\x) at (\angle:6.9) {\scriptsize $b_{\x}$};
    }
    \foreach \x in {0,...,10} {
        \pgfmathsetmacro{\angle}{90+360*\x/11 + 360/33}
    	\node[draw=none] (b\x) at (\angle:6.9) {\scriptsize $d_{\x}$};
    }
    \foreach \x in {0,...,10} {
        \pgfmathsetmacro{\angle}{90+360*\x/11 - 360/33}
    	\node[draw=none] (b\x) at (\angle:6.9) {\scriptsize $c_{\x}$};
    }

    \foreach \x in {0,...,10} {
        \pgfmathsetmacro{\angle}{90+360*\x/11}
    	\node[draw=none] (b\x) at (\angle:1.9) {\scriptsize $a_{\x}$};
    }

\end{tikzpicture}

    \caption{The flower snark $J_{11}$. The suitable edges $b_0c_0$ and $b_4c_4$ are marked in bold red.}
    \label{fig:J11}
\end{figure}

\begin{figure}[h!t]
	\centering
\begin{tikzpicture}[every node/.style={circle, draw, inner sep=1pt,minimum size=10pt},every edge/.style={draw},scale=0.7]
    \node (0) at (0.000000000000000,6.25000000000000) {\tiny 0};
    \node (1) at (-1.00257050536100,6.16906414148696) {\tiny 1};
    \node (2) at (-1.97917496125920,5.92835276216966) {\tiny 2};
    \node (3) at (-2.90451982527355,5.53410016033256) {\tiny 3};
    \node (4) at (-3.75463915148737,4.99651727127188) {\tiny 4};
    \node (5) at (-4.50751529589884,4.32952720943500) {\tiny 5};
    \node (6) at (-5.14364916183535,3.55040466706972) {\tiny 6};
    \node (7) at (-5.64656521631489,2.67932850876909) {\tiny 7};
    \node (8) at (-6.00323819769608,1.73885914947783) {\tiny 8};
    \node (9) at (-6.20443046311284,0.753354251595769) {\tiny 9};
    \node (10) at (-6.24493123857231,-0.251662125683843) {\tiny 10};
    \node (11) at (-6.12369157526417,-1.25016058610028) {\tiny 11};
    \node (12) at (-5.84385151678384,-2.21628054401585) {\tiny 12};
    \node (13) at (-5.41265877365274,-3.12500000000000) {\tiny 13};
    \node (14) at (-4.84128101142284,-3.95278359747111) {\tiny 14};
    \node (15) at (-4.14451661400497,-4.67819217606938) {\tiny 15};
    \node (16) at (-3.34041141329876,-5.28243803464872) {\tiny 16};
    \node (17) at (-2.44979131162547,-5.74987152286765) {\tiny 17};
    \node (18) at (-1.49572290179724,-6.06838635891283) {\tiny 18};
    \node (19) at (-0.502916054479536,-6.22973317583881) {\tiny 19};
    \node (20) at (0.502916054479533,-6.22973317583881) {\tiny 20};
    \node (21) at (1.49572290179723,-6.06838635891283) {\tiny 21};
    \node (22) at (2.44979131162547,-5.74987152286765) {\tiny 22};
    \node (23) at (3.34041141329876,-5.28243803464872) {\tiny 23};
    \node (24) at (4.14451661400497,-4.67819217606938) {\tiny 24};
    \node (25) at (4.84128101142284,-3.95278359747111) {\tiny 25};
    \node (26) at (5.41265877365274,-3.12500000000000) {\tiny 26};
    \node (27) at (5.84385151678384,-2.21628054401585) {\tiny 27};
    \node (28) at (6.12369157526417,-1.25016058610028) {\tiny 28};
    \node (29) at (6.24493123857231,-0.251662125683845) {\tiny 29};
    \node (30) at (6.20443046311284,0.753354251595764) {\tiny 30};
    \node (31) at (6.00323819769608,1.73885914947783) {\tiny 31};
    \node (32) at (5.64656521631489,2.67932850876909) {\tiny 32};
    \node (33) at (5.14364916183535,3.55040466706973) {\tiny 33};
    \node (34) at (4.50751529589884,4.32952720943500) {\tiny 34};
    \node (35) at (3.75463915148737,4.99651727127188) {\tiny 35};
    \node (36) at (2.90451982527355,5.53410016033256) {\tiny 36};
    \node (37) at (1.97917496125921,5.92835276216966) {\tiny 37};
    \node (38) at (1.00257050536100,6.16906414148696) {\tiny 38};
    \node (39) at (0.000000000000000,2.50000000000000) {\tiny 39};
    \node (40) at (-1.16180793010942,2.21364006413303) {\tiny 40};
    \node (41) at (-2.05745966473414,1.42016186682789) {\tiny 41};
    \node (42) at (-2.48177218524513,0.301341700638313) {\tiny 42};
    \node (43) at (-2.33754060671354,-0.886512217606337) {\tiny 43};
    \node (44) at (-1.65780664560199,-1.87127687042775) {\tiny 44};
    \node (45) at (-0.598289160718898,-2.42735454356513) {\tiny 45};
    \node (46) at (0.598289160718894,-2.42735454356513) {\tiny 46};
    \node (47) at (1.65780664560198,-1.87127687042776) {\tiny 47};
    \node (48) at (2.33754060671354,-0.886512217606342) {\tiny 48};
    \node (49) at (2.48177218524514,0.301341700638308) {\tiny 49};
    \node (50) at (2.05745966473414,1.42016186682789) {\tiny 50};
    \node (51) at (1.16180793010942,2.21364006413302) {\tiny 51};
    \path (0) edge (1);
    \path[ultra thick, red] (0) edge (38);
    \path (0) edge (39);
    \path (1) edge (2);
    \path (1) edge (35);
    \path (2) edge (3);
    \path (2) edge (7);
    \path (3) edge (4);
    \path (3) edge (40);
    \path (4) edge (5);
    \path (4) edge (38);
    \path (5) edge (6);
    \path (5) edge (10);
    \path (6) edge (7);
    \path (6) edge (41);
    \path (7) edge (8);
    \path (8) edge (9);
    \path (8) edge (13);
    \path (9) edge (10);
    \path (9) edge (42);
    \path (10) edge (11);
    \path[ultra thick, red] (11) edge (12);
    \path (11) edge (16);
    \path (12) edge (13);
    \path (12) edge (43);
    \path (13) edge (14);
    \path (14) edge (15);
    \path (14) edge (19);
    \path (15) edge (16);
    \path (15) edge (44);
    \path (16) edge (17);
    \path (17) edge (18);
    \path (17) edge (22);
    \path (18) edge (19);
    \path (18) edge (45);
    \path (19) edge (20);
    \path (20) edge (21);
    \path (20) edge (25);
    \path (21) edge (22);
    \path (21) edge (46);
    \path (22) edge (23);
    \path (23) edge (24);
    \path (23) edge (28);
    \path (24) edge (25);
    \path (24) edge (47);
    \path (25) edge (26);
    \path (26) edge (27);
    \path (26) edge (31);
    \path (27) edge (28);
    \path (27) edge (48);
    \path (28) edge (29);
    \path (29) edge (30);
    \path (29) edge (34);
    \path (30) edge (31);
    \path (30) edge (49);
    \path (31) edge (32);
    \path (32) edge (33);
    \path (32) edge (37);
    \path (33) edge (34);
    \path (33) edge (50);
    \path (34) edge (35);
    \path (35) edge (36);
    \path (36) edge (37);
    \path (36) edge (51);
    \path (37) edge (38);
    \path (39) edge (40);
    \path (39) edge (51);
    \path (40) edge (41);
    \path (41) edge (42);
    \path (42) edge (43);
    \path (43) edge (44);
    \path (44) edge (45);
    \path (45) edge (46);
    \path (46) edge (47);
    \path (47) edge (48);
    \path (48) edge (49);
    \path (49) edge (50);
    \path (50) edge (51);

    \foreach \x in {0,...,12} {
        \pgfmathsetmacro{\angle}{90+360*\x/13}
    	\node[draw=none] (b\x) at (\angle:6.9) {\scriptsize $b_{\x}$};
    }
    \foreach \x in {0,...,12} {
        \pgfmathsetmacro{\angle}{90+360*\x/13 + 360/39}
    	\node[draw=none] (b\x) at (\angle:6.9) {\scriptsize $d_{\x}$};
    }
    \foreach \x in {0,...,12} {
        \pgfmathsetmacro{\angle}{90+360*\x/13 - 360/38}
    	\node[draw=none] (b\x) at (\angle:6.9) {\scriptsize $c_{\x}$};
    }

    \foreach \x in {0,...,12} {
        \pgfmathsetmacro{\angle}{90+360*\x/13}
    	\node[draw=none] (b\x) at (\angle:1.9) {\scriptsize $a_{\x}$};
    }
\end{tikzpicture}
    \caption{The flower snark $J_{13}$. The suitable edges $b_0c_0$ and $b_4c_4$ are marked in bold red.}
    \label{fig:J13}
\end{figure}






Since Steffen's statement of Lemma~\ref{lem:fixed} remains intact, the proof and statement of his main result, reproduced above as Theorem~\ref{Steffen}, are correct as given in~\cite{St01}. Even though we prove a stronger version of Steffen's theorem in the next section, we think it is important to fix the proof of Lemma~\ref{lem:fixed} as there may be others who rely on this lemma, or might want to rely on it in the future.

\subsection{Orders of hypohamiltonian snarks}
\label{subsect:orders}

We shall now prove a strengthening of Steffen's theorem, which in a sense is strongest possible since we will determine all orders for which hypohamiltonian snarks exist. We emphasise that our proof's mechanism contains significantly fewer ``moving parts'' than M\'{a}\v{c}ajov\'{a} and \v{S}koviera's~\cite{MS06}, and, as mentioned in the introduction, our theorem also strengthens their result. We do need the following two easily verifiable lemmas.

\begin{lemma}\label{lem:B}
The second Blanu\v{s}a snark $B_2$ shown in Figure~\ref{fig:B2} has a pair of suitable edges.
\end{lemma}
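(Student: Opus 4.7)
The plan is to mimic the structure of the proof of Lemma~\ref{lem:fixed}. That is, I would name a specific independent pair of edges $e_1 = ab$ and $e_2 = cd$ in $B_2$ (read off from Figure~\ref{fig:B2}), and then verify conditions~(i) and~(ii) of Theorem~\ref{theorem:gz} by explicitly exhibiting the relevant hamiltonian paths and the spanning pair of disjoint paths. Since $B_2$ has only $18$ vertices, this is a small finite check, well within what one can do (and later display) by hand or with a short computer search.

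First, I would pick a candidate pair. A natural guess is to look for two edges that are symmetric under an automorphism of $B_2$ and that lie in ``opposite'' sides of the snark, analogous to how $b_0c_0$ and $b_4c_4$ were chosen in the flower snarks. With the labeling of Figure~\ref{fig:B2} in hand, I would simply enumerate all $\binom{27}{2}$ pairs of independent edges, and for each one test the two conditions; the lemma asserts that at least one such pair passes. I would then record one specific pair $\{ab,cd\}$ that works.

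Next comes the verification of condition~(i). Here I would list four hamiltonian paths of $B_2 - \{ab,cd\}$, one with endpoints $(a,c)$, one with $(a,d)$, one with $(b,c)$, and one with $(b,d)$, written as vertex sequences in the style of the proof of Lemma~\ref{lem:fixed}. Then I would exhibit a pair of vertex-disjoint paths in $B_2 - \{ab,cd\}$, the first from $a$ to $b$ and the second from $c$ to $d$, whose union spans $V(B_2)$, certifying that $((a,b),(c,d))$ is good.

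For condition~(ii), I would give, for each of the $18$ vertices $v$, a hamiltonian path of $B_2 - \{v, ab, cd\}$ with endpoints either $\{a,b\}$ or $\{c,d\}$. As in the $J_9$ case, one explicit path for a representative $v$ can be shown in the main text, with the remaining $17$ deferred to the Appendix; every path can be checked to be valid (it avoids $ab$ and $cd$, skips $v$, and visits every other vertex exactly once). No technical difficulty is expected beyond bookkeeping: since $B_2$ is small, the existence of the needed hamiltonian paths is a routine computer verification, and writing them out is the only real work. The main (mild) obstacle is simply finding the right pair of edges in the first place, since not every independent pair will satisfy~(ii) for all $v$; as the Petersen example discussed just before Theorem~\ref{theorem:gz} shows, suitable edges need not exist at all, so the selection must be made with care. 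Once a working pair is fixed, the proof reduces to displaying the explicit paths, exactly as in the proof of Lemma~\ref{lem:fixed}.
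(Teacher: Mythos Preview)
Your proposal is correct and follows essentially the same approach as the paper: the authors exhibit a specific suitable pair (namely $\{(6,8),(10,16)\}$), list the four hamiltonian paths and the spanning pair of disjoint paths for condition~(i), and then give the hamiltonian paths required by condition~(ii). The only refinement you might add is that the paper exploits an automorphism of $B_2$ to reduce the eighteen cases in~(ii) to six representative vertices $v\in\{0,2,4,6,7,8\}$, which cuts the bookkeeping in half.
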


\begin{proof}
Figure~\ref{fig:B2} shows the second Blanu\v{s}a snark $B_2$. By computer we determined that $B_2$ has exactly three pairs of suitable edges: $((6,8),(10,16))$, $((3,9),(12,17))$ and $((4,7),$ $(13,15))$. We will now prove by hand that $((6,8),(10,16))$ is a suitable edge pair.

$(6,10), (6,16), (8,10)$ and $(8,16)$ are good in $B_2 - \{(6,8),(10,16)\}$ due to the following hamiltonian paths, respectively:

\begin{itemize}
{\small
\item 10, 11, 12, 17, 16, 15, 13, 14, 0, 1, 5, 4, 7, 8, 9, 3, 2, 6
\item 16, 15, 9, 8, 7, 17, 12, 13, 14, 10, 11, 1, 0, 2, 3, 4, 5, 6
\item 10, 11, 1, 0, 14, 13, 12, 17, 16, 15, 9, 3, 2, 6, 5, 4, 7, 8
\item 16, 15, 9, 3, 4, 5, 6, 2, 0, 1, 11, 10, 14, 13, 12, 17, 7, 8}
\end{itemize}

$((6,8),(10,16))$ is good in $B_2 - \{(6,8),(10,16)\}$ due to the following two disjoint paths with end-vertices $6$ and $8$, and $10$ and $16$, respectively, which together span $B_2$.

\begin{itemize}
{\small
\item 8, 7, 4, 5, 6
\item 16, 17, 12, 11, 1, 0, 2, 3, 9, 15, 13, 14, 10}
\end{itemize}

We now prove that at least one of $(6,8)$ or $(10,16)$ is good in $B_2 - \{v,(6,8),(10,16)\}$ for every $v \in V(B_2)$. By symmetry we only need to prove this for $v=0,2,4,6,7,8$.
\begin{itemize}
{\footnotesize
\item $v = 0$: 8, 7, 4, 5, 1, 11, 10, 14, 13, 12, 17, 16, 15, 9, 3, 2, 6
\item $v = 2$: 8, 9, 3, 4, 7, 17, 16, 15, 13, 12, 11, 10, 14, 0, 1, 5, 6
\item $v = 4$: 16, 15, 13, 14, 0, 1, 5, 6, 2, 3, 9, 8, 7, 17, 12, 11, 10
\item $v = 6$: 16, 15, 9, 8, 7, 17, 12, 13, 14, 0, 2, 3, 4, 5, 1, 11, 10
\item $v = 7$: 8, 9, 15, 16, 17, 12, 13, 14, 10, 11, 1, 0, 2, 3, 4, 5, 6
\item $v = 8$: 16, 15, 9, 3, 2, 6, 5, 4, 7, 17, 12, 13, 14, 0, 1, 11, 10
}
\end{itemize}
\end{proof}

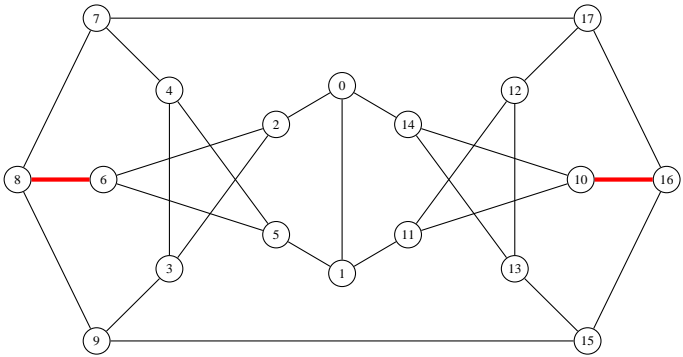
\begin{figure}[h!t]
	\centering       	
\begin{tikzpicture}[every node/.style={circle, draw, inner sep=1pt,
minimum size=10pt},every edge/.style={draw},scale=0.5]
    \node (0) at (0,2.50000000000000) {\tiny 0};
    \node (1) at (0,-2.50000000000000) {\tiny 1};
    \node (2) at (-1.72745751406263,1.46946313073118) {\tiny 2};
    \node (3) at (-4.52254248593737,-2.37764129073788) {\tiny 3};
    \node (4) at (-4.52254248593737,2.37764129073788) {\tiny 4};
    \node (5) at (-1.72745751406263,-1.46946313073118) {\tiny 5};
    \node (6) at (-6.25000000000000,0) {\tiny 6};
    \node (7) at (-6.42919391022303,4.30007315357417) {\tiny 7};
    \node (8) at (-8.50000000000000,0) {\tiny 8};
    \node (9) at (-6.42919391022304,-4.30007315357416) {\tiny 9};
    \node (10) at (6.25000000000000,0.000000000000000) {\tiny 10};
    \node (11) at (1.72745751406263,-1.46946313073118) {\tiny 11};
    \node (12) at (4.52254248593737,2.37764129073788) {\tiny 12};
    \node (13) at (4.52254248593737,-2.37764129073788) {\tiny 13};
    \node (14) at (1.72745751406263,1.46946313073118) {\tiny 14};
    \node (15) at (6.42919391022303,-4.30007315357416) {\tiny 15};
    \node (16) at (8.50000000000000,0.000000000000000) {\tiny 16};
    \node (17) at (6.42919391022303,4.30007315357416) {\tiny 17};
    \path (0) edge (1);
    \path (0) edge (2);
    \path (0) edge (14);
    \path (1) edge (5);
    \path (1) edge (11);
    \path (2) edge (3);
    \path (2) edge (6);
    \path (3) edge (4);
    \path (3) edge (9);
    \path (4) edge (5);
    \path (4) edge (7);
    \path (5) edge (6);
    \path[ultra thick, red] (6) edge (8);
    \path (7) edge (8);
    \path (7) edge (17);
    \path (8) edge (9);
    \path (9) edge (15);
    \path (10) edge (11);
    \path (10) edge (14);
    \path[ultra thick, red] (10) edge (16);
    \path (11) edge (12);
    \path (12) edge (13);
    \path (12) edge (17);
    \path (13) edge (14);
    \path (13) edge (15);
    \path (15) edge (16);
    \path (16) edge (17);
\end{tikzpicture}

    \caption{The second Blanu\v{s}a snark. It has 18 vertices. The suitable edges $(6,8)$ and $(10,16)$ are marked in bold red.}
    \label{fig:B2}
\end{figure}

\newpage 

\begin{lemma}\label{lem:22}
The first Loupekine snark $L_1$ shown in Figure~\ref{fig:L1} has a pair of suitable edges.
\end{lemma}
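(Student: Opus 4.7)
The plan is to mimic exactly the structure used in the proof of Lemma~\ref{lem:B}. First I would use a short computer search over all unordered pairs of independent edges in $L_1$ to identify those pairs $\{ab, cd\}$ that are suitable in the sense of Theorem~\ref{theorem:gz}. Because $L_1$ has only 22 vertices and 33 edges, the number of candidate pairs is small, and for each candidate one only needs to check a fixed list of hamiltonicity properties in $L_1 - \{ab, cd\}$ and in $L_1 - \{v, ab, cd\}$ for the 22 choices of~$v$, all of which reduce to Hamiltonian path searches in graphs on at most 22 vertices. I expect the search to return at least one suitable pair; once such a pair is fixed, the remainder of the proof becomes a certificate check that can be done by hand.

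Next, after fixing a concrete suitable pair $\{ab, cd\}$ in $L_1$, I would establish condition (i) of Theorem~\ref{theorem:gz} by explicitly writing down four hamiltonian paths in $L_1 - \{ab, cd\}$, one for each of the pairs of end-vertices $(a,c)$, $(a,d)$, $(b,c)$, $(b,d)$, together with a pair of vertex-disjoint paths which together span $L_1 - \{ab, cd\}$ and realise the good pair $((a,b),(c,d))$. Each of these five certificates is then verified by inspection against Figure~\ref{fig:L1}: one checks that consecutive vertices are adjacent in $L_1$, that the listed edges avoid $ab$ and $cd$, and that the vertex set is covered exactly once.

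For condition (ii), I would exhibit, for every vertex $v \in V(L_1)$, a hamiltonian path in $L_1 - \{v, ab, cd\}$ whose end-vertices are either $a$ and $b$ or $c$ and $d$. Before doing so I would identify the automorphism group of $L_1$ (again by a short computer computation), and use the orbits of this group acting on $V(L_1)$ to reduce the 22 cases to one representative per orbit, exactly as was done for $B_2$ in Lemma~\ref{lem:B}. Each remaining case is discharged by listing an explicit hamiltonian path and checking it against the edge set of $L_1$.

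The only genuine obstacle is producing a suitable edge pair in the first place: a priori it is not guaranteed that $L_1$ contains one, and the proof collapses if no such pair exists. However, based on the analogy with $B_2$ and the fact that $L_1$ has a reasonably rich automorphism group, I expect suitable pairs to exist; once one is identified, the rest of the argument is a routine verification of a bounded list of hamiltonian paths, fully parallel to the proof of Lemma~\ref{lem:B}.
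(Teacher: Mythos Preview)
Your plan is correct and matches the paper's proof almost exactly: the paper runs a computer search, finds six suitable pairs (it picks $((3,4),(10,12))$), exhibits the five certificates for condition~(i), and then uses symmetry to cut the 22 cases in condition~(ii) down to twelve explicit hamiltonian paths. One small point to tighten: when you reduce cases by symmetry you must use the stabiliser of the chosen edge pair $\{ab,cd\}$ in $\mathrm{Aut}(L_1)$, not the full automorphism group, since only automorphisms preserving $\{ab,cd\}$ (as a set) transport a witness for one $v$ to a witness for another.
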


\begin{proof}

Figure~\ref{fig:L1} shows the first Loupekine snark~$L_1$. By computer we determined that $L_1$ has exactly six pairs of suitable edges: $((0,1),(17,20))$, $((0,2),(8,17))$, $((1,5),(14,20))$, $((2,3),(8,10))$, $((3,4),(10,12))$ and $((4,5),(12,14))$. We will now prove by hand that $((3,4),(10,12))$ is a suitable edge pair.

$(3,10), (3,12), (4,10)$ and $(4,12)$ are good in $L_1 - \{(3,4),(10,12)\}$ due to the following hamiltonian paths, respectively:

\begin{itemize}
{\small
\item 10, 7, 9, 6, 8, 17, 19, 21, 16, 13, 11, 0, 1, 5, 4, 18, 20, 14, 12, 15, 2, 3
\item 12, 14, 20, 18, 4, 5, 7, 10, 8, 17, 19, 21, 16, 9, 6, 1, 0, 11, 13, 15, 2, 3
\item 10, 7, 5, 1, 0, 11, 13, 16, 9, 6, 8, 17, 20, 14, 12, 15, 2, 3, 19, 21, 18, 4
\item 12, 14, 11, 0, 1, 6, 9, 16, 13, 15, 2, 3, 19, 21, 18, 20, 17, 8, 10, 7, 5, 4
}
\end{itemize}

$((3,4),(10,12))$ is good in $L_1 - \{(3,4),(10,12)\}$ due to the following two disjoint paths with end-vertices $3$ and $4$, and $10$ and $12$, respectively, which together span $L_1$.

\begin{itemize}
{\small
\item 4, 5, 1, 6, 8, 17, 20, 18, 21, 19, 3
\item 12, 14, 11, 0, 2, 15, 13, 16, 9, 7, 10}
\end{itemize}

We now prove that at least one of $(3,4)$ or $(10,12)$ is good in $L_1 - \{v,(3,4),(10,12)\}$ for every $v \in V(L_1)$. By symmetry we only need to prove this for $v=1,4,5,6,7,8,9,10,$ $16,17,18,21$.

\begin{itemize}
{\footnotesize
\item $v = 1$: 12, 15, 13, 16, 9, 6, 8, 17, 20, 14, 11, 0, 2, 3, 19, 21, 18, 4, 5, 7, 10
\item $v = 4$: 12, 14, 20, 18, 21, 16, 9, 6, 8, 17, 19, 3, 2, 15, 13, 11, 0, 1, 5, 7, 10
\item $v = 5$: 4, 18, 20, 17, 8, 10, 7, 9, 6, 1, 0, 2, 15, 12, 14, 11, 13, 16, 21, 19, 3
\item $v = 6$: 4, 5, 1, 0, 2, 15, 12, 14, 11, 13, 16, 9, 7, 10, 8, 17, 20, 18, 21, 19, 3
\item $v = 7$: 12, 14, 20, 17, 19, 3, 2, 15, 13, 11, 0, 1, 5, 4, 18, 21, 16, 9, 6, 8, 10
\item $v = 8$: 12, 14, 20, 17, 19, 3, 2, 15, 13, 11, 0, 1, 6, 9, 16, 21, 18, 4, 5, 7, 10
\item $v = 9$: 4, 5, 7, 10, 8, 6, 1, 0, 2, 15, 12, 14, 11, 13, 16, 21, 18, 20, 17, 19, 3
\item $v = 10$: 4, 5, 7, 9, 16, 13, 11, 0, 1, 6, 8, 17, 19, 21, 18, 20, 14, 12, 15, 2, 3
\item $v = 16$: 4, 5, 1, 6, 9, 7, 10, 8, 17, 19, 21, 18, 20, 14, 12, 15, 13, 11, 0, 2, 3
\item $v = 17$: 4, 5, 1, 6, 8, 10, 7, 9, 16, 13, 11, 0, 2, 15, 12, 14, 20, 18, 21, 19, 3
\item $v = 18$: 4, 5, 1, 6, 8, 10, 7, 9, 16, 21, 19, 17, 20, 14, 12, 15, 13, 11, 0, 2, 3
\item $v = 21$: 4, 18, 20, 14, 12, 15, 2, 0, 11, 13, 16, 9, 6, 1, 5, 7, 10, 8, 17, 19, 3
}
\end{itemize}
\end{proof}

\begin{figure}[h!t]
	\centering       	

\begin{tikzpicture}[every node/.style={circle, draw, inner sep=1pt,
minimum size=10pt},every edge/.style={draw},scale=0.5]
    \node (0) at (13.1500000000000,14.9000000000000) {\tiny 0};
    \node (1) at (4.65000000000000,14.9500000000000) {\tiny 1};
    \node (2) at (15.7500000000000,8.36111111111111) {\tiny 2};
    \node (3) at (13.0500000000000,1.70000000000000) {\tiny 3};
    \node (4) at (4.54069767441861,1.84722222222222) {\tiny 4};
    \node (5) at (2.05000000000000,8.35000000000000) {\tiny 5};
    \node (6) at (4.72674418604651,11.9097222222222) {\tiny 6};
    \node (7) at (4.30813953488372,9.91666666666667) {\tiny 7};
    \node (8) at (5.44978858350951,8.96955719557196) {\tiny 8};
    \node (9) at (6.68023255813953,10.0138888888889) {\tiny 9};
    \node (10) at (6.44767441860465,11.8125000000000) {\tiny 10};
    \node (11) at (12.5956659619450,11.9830873308733) {\tiny 11};
    \node (12) at (10.7732558139535,11.7638888888889) {\tiny 12};
    \node (13) at (10.6802325581395,9.62500000000000) {\tiny 13};
    \node (14) at (12.5872093023256,8.99305555555556) {\tiny 14};
    \node (15) at (13.5681818181818,10.2754202542025) {\tiny 15};
    \node (16) at (8.63372093023256,8.40972222222222) {\tiny 16};
    \node (17) at (7.19186046511628,5.49305555555556) {\tiny 17};
    \node (18) at (7.47093023255814,3.69444444444444) {\tiny 18};
    \node (19) at (9.61046511627907,3.64583333333333) {\tiny 19};
    \node (20) at (9.88953488372093,5.20138888888889) {\tiny 20};
    \node (21) at (8.58720930232558,6.22222222222222) {\tiny 21};
    \path (0) edge (1);
    \path (0) edge (2);
    \path (0) edge (11);
    \path (1) edge (5);
    \path (1) edge (6);
    \path (2) edge (3);
    \path (2) edge (15);
    \path[ultra thick, red] (3) edge (4);
    \path (3) edge (19);
    \path (4) edge (5);
    \path (4) edge (18);
    \path (5) edge (7);
    \path (6) edge (8);
    \path (6) edge (9);
    \path (7) edge (9);
    \path (7) edge (10);
    \path (8) edge (10);
    \path (8) edge (17);
    \path (9) edge (16);
    \path[ultra thick, red] (10) edge (12);
    \path (11) edge (13);
    \path (11) edge (14);
    \path (12) edge (14);
    \path (12) edge (15);
    \path (13) edge (15);
    \path (13) edge (16);
    \path (14) edge (20);
    \path (16) edge (21);
    \path (17) edge (19);
    \path (17) edge (20);
    \path (18) edge (20);
    \path (18) edge (21);
    \path (19) edge (21);
\end{tikzpicture}

    \caption{The first Loupekine snark $L_1$. It has 22 vertices. The suitable edges $(3,4)$ and $(10,12)$ are marked in bold red.}
    \label{fig:L1}
\end{figure}
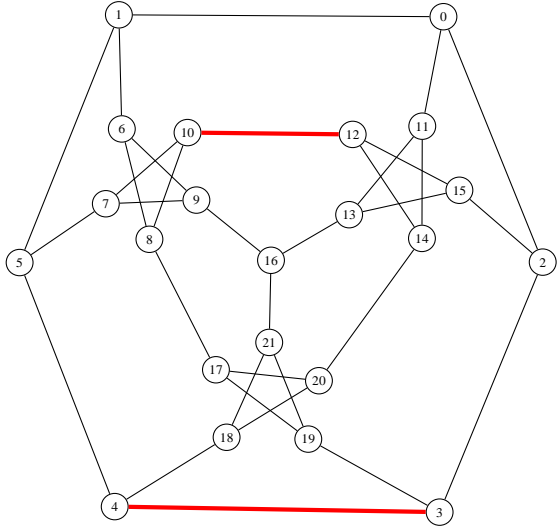

\newpage

The following is the strongest form of Steffen's Theorem~\ref{Steffen}.

\begin{theorem}\label{thm:hyposnarks}
There exists a hypohamiltonian snark of order $n$ iff $n \in \{ 10, 18, 20, 22 \}$ or $n$ is even and $n \ge 26$.
\end{theorem}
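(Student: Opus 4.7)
The plan is to establish both directions of the biconditional. For the non-existence direction, every snark is cubic and hence has even order, and since the Petersen graph is the smallest snark, no snark of order less than $10$ exists; moreover, no snarks of orders $12$, $14$, or $16$ exist. This leaves only order $24$ to dispose of, which we handle by an exhaustive computer search (cf.\ Section~\ref{sect:conjecture_steffen}).

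For the existence direction I would use strong induction on $n$, with Theorem~\ref{theorem:gz} as the engine. Given any hypohamiltonian snark $H$, the dot product $B_2 \cdot H$ (resp.\ $L_1 \cdot H$) is a hypohamiltonian snark of order $|V(H)|+16$ (resp.\ $|V(H)|+20$); the suitable edges demanded of $B_2$ and $L_1$ are supplied by Lemmas~\ref{lem:B} and \ref{lem:22}. The base cases, orders $10,18,20,22,26,28,30,32,34,36$, are handled as follows: $10$ by the Petersen graph $P$; $18$ by $B_2$; $20$, $28$, $36$ by Steffen's Theorem~\ref{Steffen}; $22$ by $L_1$; $26$, $30$, $34$ by the dot products $B_2\cdot P$, $L_1\cdot P$, $B_2\cdot B_2$; and $32$ by a direct computer construction, since no dot-product combination of the snarks in our arsenal reaches $32$ (snarks of orders $12$ and $16$ do not exist).

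For the inductive step, fix even $n \geq 38$ and assume the claim for all smaller orders. If $n \neq 40$, then $n-16 \in \{22\} \cup \{m \text{ even} : m \geq 26\}$, so by the inductive hypothesis a hypohamiltonian snark $S$ of order $n-16$ is available, and $B_2 \cdot S$ yields one of order $n$. For the exceptional value $n=40$, the $B_2$-amplifier fails because $n-16=24$ is forbidden; instead apply the $L_1$-amplifier to the base-case snark of order $20$, obtaining $L_1 \cdot S_{20}$, a hypohamiltonian snark of order $40$.

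The principal obstacle is precisely the case $n=40$, which is the sole reason we need two amplifiers of different step sizes rather than one: without $L_1$ the uniform $B_2$-amplification cannot bridge the gap at the forbidden order $24$. A secondary hurdle, stemming from the same structural defect (non-existence of snarks of orders $12$ and $16$), is the order-$32$ base case, which resists all dot-product constructions from the snarks at hand and must be certified computationally.
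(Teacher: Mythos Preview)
Your proof is correct and follows essentially the same strategy as the paper: both use Theorem~\ref{theorem:gz} with the suitable edges of $B_2$ (Lemma~\ref{lem:B}) as the main $+16$ amplifier, invoke $L_1$ (Lemma~\ref{lem:22}) to patch the exceptional order~$40$ created by the gap at~$24$, and rely on computation (via~\cite{BGHM13}, reflected in Table~\ref{table:numhypo}) for the non-existence at order~$24$ and the sporadic small cases. The only cosmetic differences are that the paper sources all base cases $18$--$36$ uniformly from~\cite{BGHM13} rather than mixing Steffen's Theorem~\ref{Steffen}, explicit dot products, and computation, and it phrases the induction as an explicit two-step ladder ($34$--$48$, then $50$--$64$, then iterate) rather than strong induction; also, your cross-reference for the order-$24$ search should point to Table~\ref{table:numhypo} (or~\cite{BGHM13}) rather than Section~\ref{sect:conjecture_steffen}.
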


\begin{proof}
For $n = 10$, it is well-known that the Petersen graph is hypohamiltonian and it is also well-known that no snarks exist of order~12, 14 or 16.

In Lemma~\ref{lem:B} we showed that the second Blanu\v{s}a snark $B_2$ (which has order~18) contains a pair of suitable edges. In~\cite{BGHM13} it was proven that hypohamiltonian snarks exists for all even orders from 18 to 36 with the exception of 24 (see Table~\ref{table:numhypo}). Let $S_n$ denote a hypohamiltonian snark of order~$n$. Using Theorem~\ref{theorem:gz}, we form the dot product $B_2 \cdot S_n$ for $n \in \{ 18, 20, 22, 26, 28, 30, 32 \}$ and obtain hypohamiltonian snarks of all even orders between 34 and 48 with the exception of 40 (recall that the dot product of two snarks is a snark).

By Lemma~\ref{lem:22} we know that the first Loupekine snark $L_1$ (which has order~22) contains a pair of suitable edges. Applying Theorem~\ref{theorem:gz} to this snark and a 22-vertex hypohamiltonian snark, we obtain a hypohamiltonian snark of order~40.

We form the dot product $B_2 \cdot S_n$ for all even $n \in \{ 34, ..., 48 \}$ and obtain hypohamiltonian snarks of all even orders from 50 to 64. This may now be iterated ad infinitum, and the proof is complete.
\end{proof}

In~\cite{MS06} M\'{a}\v{c}ajov\'{a} and \v{S}koviera proved the following theorem (which fully settles Problem~\ref{problem:NS}).

\begin{theorem}[M\'{a}\v{c}ajov\'{a} and \v{S}koviera, Theorems A and B in~\cite{MS06}]\label{thm:ms06}
There exists an irreducible snark of order $n$ iff $n \in \{ 10, 18, 20, 22 \}$ or $n$ is even and $n \ge 26$.
\end{theorem}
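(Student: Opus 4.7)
The plan is to derive Theorem~\ref{thm:ms06} from the strictly stronger Theorem~\ref{thm:hyposnarks} by embedding the class of hypohamiltonian snarks into the class of irreducible snarks. The bridge consists of two classical inputs: by Steffen~\cite{St98}, every cubic hypohamiltonian graph with chromatic index~$4$ is bicritical; and by a characterisation due to Nedela and \v{S}koviera~\cite{NS96}, a cubic graph with chromatic index~$4$ is an irreducible snark if and only if it is bicritical. Composing these yields that every hypohamiltonian snark is an irreducible snark.

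For the existence direction of Theorem~\ref{thm:ms06}, this inclusion together with Theorem~\ref{thm:hyposnarks} is all that is needed: whenever $n \in \{10, 18, 20, 22\}$ or $n$ is even and $n \ge 26$, the hypohamiltonian snark of order~$n$ supplied by Theorem~\ref{thm:hyposnarks} is automatically an irreducible snark of order~$n$, settling the ``if'' direction without any further construction.

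For the non-existence direction I would dispose of the small cases separately. Every irreducible snark is cubic, hence of even order, so all odd $n$ are immediately excluded. For even $n \le 16$, no graph satisfying the full definition of a snark (cubic, cyclically $4$-edge-connected, chromatic index~$4$, girth~$\ge 5$) exists apart from the Petersen graph at $n=10$; this is classical and also reads off from computer enumerations of small cubic graphs. For $n = 24$, snarks do exist but none of them is irreducible; I would invoke this fact from the published census of small snarks, e.g.~\cite{BGHM13}.

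The principal obstacle is precisely the case $n = 24$: Theorem~\ref{thm:hyposnarks} only excludes \emph{hypohamiltonian} snarks of that order, and since the inclusion of hypohamiltonian snarks into irreducible snarks is in general strict, one must verify separately that no \emph{irreducible} snark of order~$24$ exists either. This step lies outside the scope of our main theorem and requires input beyond it---either an ad hoc structural analysis of the finitely many snarks on $24$ vertices, or a direct computational verification drawn from the existing catalogues.
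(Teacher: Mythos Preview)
Your proposal is correct and aligns with the paper's treatment: the paper does not prove Theorem~\ref{thm:ms06} independently (it is cited from~\cite{MS06}) but argues that Theorem~\ref{thm:hyposnarks} implies it via exactly the bicritical bridge you use (Steffen~\cite{St98} for hypohamiltonian $\Rightarrow$ bicritical, Nedela--\v{S}koviera~\cite{NS96} for bicritical $\Leftrightarrow$ irreducible). Your observation that the non-existence of an irreducible snark at $n=24$ does not follow from Theorem~\ref{thm:hyposnarks} alone and requires external input is accurate and in fact more carefully stated than in the paper, which handles it the same way via the enumeration data in Table~\ref{table:numhypo}.
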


Nedela and \v{S}koviera~\cite{NS96} proved that a snark is irreducible if and only if it is bicritical, and Steffen~\cite{St98} showed that every hypohamiltonian snark is bicritical---while the converse is not true, as will be shown in Table~\ref{table:numhypo}.

In~\cite{carneiro2015faster} Carneiro, da Silva, and McKay determined all 4-vertex-critical snarks up to 36 vertices and \v{S}koviera~\cite{skoviera-pers} showed that a snark is 4-vertex-critical if and only if it is irreducible. 
(A graph $G$ without $k$-flow is $k$-vertex-critical if, for every pair of vertices $(u,v)$ of $G$, identifying $u$ and $v$ yields a graph that has a $k$-flow (see~\cite{carneiro2015faster} for more details)).

The number of hypohamiltonian snarks on $n \le 36$ vertices was determined in~\cite{BGHM13} and can be found in Table~\ref{table:numhypo} together with the number of irreducible snarks from~\cite{carneiro2015faster}. These graphs can also be downloaded from the \textit{House of Graphs}~\cite{hog} at \\ \url{http://hog.grinvin.org/Snarks}.

As can be seen from Table~\ref{table:numhypo}, there are a significant number of irreducible snarks which are not hypohamiltonian. The smallest such snarks have order 26. So Theorem~\ref{thm:hyposnarks} implies Theorem~\ref{thm:ms06}, while the inverse implication is not true.

\begin{table}[ht!]
\centering
		\begin{tabular}{| c | r | r | r | r |}
		\hline
		Order & irreducible & hypo. & hypo. and $\lambda_c = 4$ & hypo. and $\lambda_c \geq 5$\\
		\hline
10  &   1  &   1  & 0 &  1\\
18  &   2  &   2  & 2 &  0\\
20  &   1  &   1  & 0 &  1\\
22  &   2  &   2  & 0 &  2\\
24  &   0  &   0  & 0 &  0\\
26  &   111  &   95  & 87 &  8\\
28  &   33  &   31  &  30 &   1\\
30  &   115  &   104  & 93 &  11\\
32  &   13  &   13  & 0 &   13\\
34  &   40 328  &   31 198  & 29 701 &  1 497\\
36  &   13 720  &   10 838  &  10 374 & 464\\
38  &   ?  &   ?  &  $\ge $ 51 431 & ?\\
40  &   ?  &   ?  &  $\ge $ 8 820 & ?\\
42  &   ?  &   ?  &  $\ge $ 20 575 458 & ?\\
44  &   ?  &   ?  &  $\ge $ 8 242 146 & ?\\
		\hline
		\end{tabular}
		\caption{Number of irreducible and hypohamiltonian snarks (see~\cite[Table 1]{carneiro2015faster} and~\cite[Table 2]{BGHM13}). $\lambda_c$~stands for cyclic edge-connectivity. The counts of cases indicated with a '$\ge$' are possibly incomplete; all other cases are complete.}
		\label{table:numhypo}
\end{table}

The hypohamiltonian snarks on $n \ge 34$ vertices constructed by the dot product in the proof of Theorem~\ref{thm:hyposnarks} clearly all have cyclic edge-connectivity~4. By combining this with Table~\ref{table:numhypo} we obtain:

\begin{corollary}\label{thm:hyposnarks-cyc4}
There exists a hypohamiltonian snark of order $n$ and cyclic edge-connectivity~$4$ iff $n \in \{ 18, 26, 28, 30 \}$ or $n$ is even and $n \ge 34$.
\end{corollary}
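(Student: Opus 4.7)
The plan is to split the iff into its two directions and read off everything from Theorem~\ref{thm:hyposnarks}, Table~\ref{table:numhypo}, and the dot product construction used in the proof of Theorem~\ref{thm:hyposnarks}.

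For the necessity direction, I would argue as follows. Assume a hypohamiltonian snark $S$ of order $n$ with $\lambda_c(S) = 4$ exists. By Theorem~\ref{thm:hyposnarks}, $n$ must already belong to $\{10,18,20,22\}$ or be even with $n \ge 26$. The orders $n \in \{10,20,22,32\}$ are ruled out by inspection of the column ``hypo.\ and $\lambda_c = 4$'' in Table~\ref{table:numhypo}, which shows $0$ in each of those rows (equivalently, every hypohamiltonian snark at those orders has cyclic edge-connectivity at least~$5$). The order $n = 24$ is excluded since no hypohamiltonian snark of that order exists. What remains is exactly $\{18,26,28,30\}$ together with even $n \ge 34$.

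For the sufficiency direction, the orders $n \in \{18, 26, 28, 30\}$ are settled by the entries of Table~\ref{table:numhypo}, which exhibit at least one hypohamiltonian snark of cyclic edge-connectivity~$4$ at each of those orders. For the remaining case of even $n \ge 34$, I would invoke the explicit dot product construction used in the proof of Theorem~\ref{thm:hyposnarks}. That construction produces, for each such $n$, a hypohamiltonian snark $G \cdot H$ with $G \in \{B_2, L_1\}$; all that is needed is to verify that $\lambda_c(G \cdot H) = 4$. Since $G \cdot H$ is a snark, $\lambda_c(G \cdot H) \ge 4$ holds by definition, so only the upper bound requires argument.

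The key observation (and essentially the only non-bookkeeping step) is that the four edges $\{aa',bb',cc',dd'\}$ introduced by the dot product form a cyclic $4$-edge-cut in $G \cdot H$: removing them disconnects $G \cdot H$ into $G - \{ab,cd\}$ and $H - \{x,y\}$, both of which contain cycles because $G$ and $H$ have at least six vertices and girth at least~$5$, so each side is large enough to host a cycle. This gives $\lambda_c(G \cdot H) \le 4$ and hence equality. I expect the only mildly delicate part to be the observation that both components of the edge-cut do contain cycles (so that the cut is genuinely \emph{cyclic} and not merely an edge-cut), but this is immediate from the girth and order hypotheses on the factors. Combining the two directions yields the stated characterisation.
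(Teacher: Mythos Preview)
Your proposal is correct and follows essentially the same route as the paper: the paper simply notes that the dot products built in the proof of Theorem~\ref{thm:hyposnarks} ``clearly all have cyclic edge-connectivity~4'' and then reads off the small cases from Table~\ref{table:numhypo}, which is exactly your argument with the ``clearly'' unpacked. One small remark: your justification that both sides of the cut contain cycles should appeal to an edge count (a cubic graph on $n\ge 6$ vertices minus two edges, respectively minus two adjacent vertices, has more edges than a forest), rather than to the girth, which by itself does not force a cycle to survive the deletions.
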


As already mentioned, every hypohamiltonian snark is irreducible, thus Corollary~\ref{thm:hyposnarks-cyc4} implies~\cite[Theorem~A]{MS06}. For higher cyclic edge-connectivity, the following is known.

\begin{theorem}[M\'a\v{c}ajov\'a and \v{S}koviera~\cite{MS07}]\label{thm:hyposnarks-cyc56}
There exists a hypohamiltonian snark of order~$n$ and cyclic connectivity~$5$ for each even $n \ge 140$, and there exists a hypohamiltonian snark of order~$n$ and cyclic connectivity~$6$ for each even $n \ge 166$.
\end{theorem}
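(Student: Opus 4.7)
The plan is to construct the required graphs by composing known small hypohamiltonian snarks via an operation that, unlike the dot product, preserves cyclic edge-connectivity of at least $5$ or $6$. The natural building blocks are the Petersen graph (hypohamiltonian, cyclically $5$-edge-connected, of order $10$) for the connectivity-$5$ case, and the flower snarks $J_{2k+1}$ for $2k+1\ge 5$ (hypohamiltonian, cyclically $6$-edge-connected, of order $8k+4$) for the connectivity-$6$ case. Since the dot product collapses cyclic edge-connectivity to $4$, it is unusable here; one instead needs a superposition-style construction, in which a suitable vertex or edge of a large factor is replaced by a small multipole (a snark with dangling half-edges) glued along an interface of at least five or six edges, respectively.

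To cover all sufficiently large even orders, I would develop a small library of such superpositions with different ``order increments'', analogously to how Steffen's Theorem~\ref{Steffen} is assembled by iterating the dot product with factors of several different orders. Concretely, one would aim for a base set of hypohamiltonian snarks with the required cyclic connectivity whose orders cover every residue class modulo some modulus $m$, and a superposition gadget that raises the order by $m$ while preserving the connectivity and hypohamiltonicity. The lower thresholds $140$ and $166$ would then be an arithmetic consequence of the base orders available and the smallest gadget increment.

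The principal obstacle, and the bulk of the work, is showing that hypohamiltonicity survives the composition. Just as in Theorem~\ref{theorem:gz} the dot-product construction needs the ``suitable edges'' conditions (i)--(ii), a superposition along a $k$-edge interface needs an analogous list of hamiltonian-path and spanning-two-path properties relative to that $k$-edge cut in each factor. I would first isolate the precise combinatorial conditions on a cyclic $5$-cut (respectively $6$-cut) in a factor that make superposition work, then verify by hand or by computer (in the style of Lemmas~\ref{lem:fixed}, \ref{lem:B}, and~\ref{lem:22}) that these conditions hold for the Petersen graph and for flower snarks $J_{2k+1}$ with all sufficiently small $k$, and finally run an inductive step showing that the superposed graph inherits the same conditions so that the construction can be iterated.

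The hard part will be the case analysis for hypohamiltonicity after vertex deletion: one must distribute the deleted vertex between the factors and assemble a hamiltonian cycle of the composite from carefully chosen hamiltonian paths (or spanning pairs of paths) across the $5$- or $6$-edge interface, and the number of interface configurations to handle is considerably larger than in the dot-product proof. Showing that no new short cycle-separating edge-cut is created is also nontrivial, but is largely a structural check of how interface edges interact with cuts in each factor, and should be manageable once the interface is fixed.
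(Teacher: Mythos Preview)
The paper does not contain a proof of this statement: Theorem~\ref{thm:hyposnarks-cyc56} is quoted from M\'a\v{c}ajov\'a and \v{S}koviera~\cite{MS07} as background, with no argument given here. So there is no ``paper's own proof'' to compare your proposal against.

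That said, your sketch points in the right general direction---\cite{MS07} indeed uses superposition-type constructions rather than the dot product, precisely to avoid collapsing cyclic edge-connectivity to~$4$---but what you have written is a plan, not a proof. Nothing concrete is specified: you do not name the multipoles, the interfaces, the actual order increments, or the base graphs covering the residue classes, and you do not carry out any of the hypohamiltonicity case analysis you yourself flag as ``the hard part''. In particular, the thresholds $140$ and $166$ are outputs of a specific construction in~\cite{MS07}; they cannot be recovered from your outline without redoing that construction in detail. If your goal were to reprove the theorem, you would need to supply the explicit gadgets and verify, at minimum, the analogue of conditions (i)--(ii) across a $5$- or $6$-edge interface for each base graph; as written, the proposal contains no step that could be checked.
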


If we relax the requirements from hypohamiltonicity to irreducibility, more is known:

\begin{theorem}[M\'a\v{c}ajov\'a and \v{S}koviera~\cite{MS06}]\label{thm:irredsnarks-cyc56}
There exists an irreducible snark of order~$n$ and cyclic connectivity~$5$ iff $n \in \{ 10, 20, 22, 26 \}$ or $n$ is even and $n \ge 30$, and there exists an irreducible snark of order~$n$ and cyclic connectivity~$6$ for each $n \equiv 4 \ (\textup{mod} \ 8)$ with $n \ge 28$, and for each even $n \ge 210$.
\end{theorem}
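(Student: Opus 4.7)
The plan is to proceed along the same general lines as Theorems~\ref{thm:hyposnarks} and \ref{thm:hyposnarks-cyc56}, namely by combining small base examples with a construction that produces infinitely many larger snarks. However, the dot product used throughout the paper always introduces a 4-edge-cut (the four new edges $aa',bb',cc',dd'$ separate $V(G)$ from $V(H')$), so it can never yield cyclic edge-connectivity~5 or 6. The workhorse must therefore be Kochol's \emph{superposition} operation, in which vertices and edges of a base snark are replaced by ``multipoles'' (snark fragments with prescribed semi-edge colourings) in a way that preserves 3-edge-uncolourability, irreducibility, and cyclic edge-connectivity.

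For the cyclic-connectivity-5 statement, I would first settle the base cases. The Petersen graph handles $n=10$, and for $n \in \{20, 22, 26\}$ explicit small cyclically 5-edge-connected irreducible snarks are exhibited (they can be pulled from the enumerations in \cite{carneiro2015faster} or constructed by hand from known small snarks); the Petersen graph itself, being vertex-transitive and of order 10, can also be combined with itself via carefully chosen operations to fill in these orders. Non-existence for $n \in \{12, 14, 16, 18, 24, 28\}$ follows because there are no snarks of order $<10$ or of order $12$, $14$, $16$, and for $18$, $24$, $28$ one checks by enumeration that every irreducible snark of that order has cyclic edge-connectivity exactly $4$. For arbitrary even $n\ge 30$, apply a superposition to the Petersen graph in which one (or two) edges are replaced by a multipole whose order ranges over a full residue system mod $2$; this requires proving a small library of ``multipole existence'' lemmas, one producing the required parity of extra vertices.

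For the cyclic-connectivity-6 statement, the crucial starting point is that the flower snark $J_{2k+1}$ is cyclically 6-edge-connected for $k \ge 2$, is irreducible, and has order $4(2k+1) \equiv 4 \pmod 8$. This immediately supplies examples for every $n\equiv 4\pmod 8$ with $n\ge 28$ (after verifying the smallest case $n=28$ separately, since $J_7$ has order $28$). For arbitrary even $n\ge 210$, I would superpose suitable multipoles on selected edges of a flower snark $J_{2k+1}$, ensuring (i)~the substitution cannot be ``shortcut'' across an edge-cut smaller than $6$, (ii)~the resulting graph remains irreducible, and (iii)~the total order hits the desired value. The threshold $210$ would emerge from the smallest admissible multipoles and from the parity/residue gymnastics needed to realise \emph{every} even order, not merely orders in a single congruence class.

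The main obstacle is the second bullet of the superposition analysis: \emph{verifying irreducibility (equivalently, bicriticality)} of the constructed graph. Preservation of snark-ness and cyclic edge-connectivity by superposition is a largely formal matter once the multipoles are fixed, but bicriticality requires that for every pair of vertices, identifying them yields a 3-edge-colourable graph, and such a pair can lie entirely inside a multipole, entirely outside, or straddle both. This forces one to prove quite strong ``proper colouring extension'' properties of each multipole used, analogous in spirit to the conditions of Theorem~\ref{theorem:gz} but considerably more delicate. A secondary (but mechanical) difficulty is finishing the small-order case analysis, which in practice is carried out by exhaustive search using the data in Table~\ref{table:numhypo} and its irreducible-snark analogue.
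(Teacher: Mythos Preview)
The paper does not prove Theorem~\ref{thm:irredsnarks-cyc56}; it is quoted verbatim from M\'a\v{c}ajov\'a and \v{S}koviera~\cite{MS06} as background, with no argument supplied here. The only remark the present paper adds is the sentence immediately following the statement, observing that since hypohamiltonian snarks are irreducible, Theorem~\ref{thm:hyposnarks-cyc56} lets one sharpen the bound $n\ge 210$ to $n\ge 166$ in the cyclic-connectivity-$6$ part. There is therefore nothing in this paper to compare your proposal against.

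Regarding the content of your sketch itself: your instinct that the dot product is useless here because it forces a cyclic $4$-edge-cut is correct, and the observation that the flower snarks $J_{2k+1}$ supply the residue class $n\equiv 4\pmod 8$ for cyclic connectivity~$6$ is exactly the mechanism used in~\cite{MS06}. However, the actual proof in~\cite{MS06} does \emph{not} rely on Kochol's superposition for either part. For cyclic connectivity~$5$ it uses a different $5$-valent gluing operation (a ``$5$-join'' of two snarks along $5$-edge-cuts) together with specific small building blocks, and for cyclic connectivity~$6$ and large even $n$ it combines flower snarks via yet another tailored operation. Your proposal to use superposition is plausible in spirit but would constitute a different, and likely harder, route: as you yourself flag, preserving bicriticality through a general superposition is delicate, whereas the bespoke constructions in~\cite{MS06} are designed so that irreducibility can be verified directly. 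Also, a small slip: $J_5$ is not cyclically $6$-edge-connected, so ``$k\ge 2$'' should be ``$k\ge 3$'', consistent with the paper's remark that $J_7$ is the smallest cyclically $6$-edge-connected hypohamiltonian snark.
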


Note that as every hypohamiltonian graph is irreducible, Theorem~\ref{thm:hyposnarks-cyc56} also implies that $n \ge 210$ can be improved to $n \ge 166$ in Theorem~\ref{thm:irredsnarks-cyc56}.

The smallest hypohamiltonian snark of cyclic edge-connectivity~5 has order~10 and is the Petersen graph, and the second-smallest such graph has order~20. The flower snark $J_7$ of order~28 is the smallest cyclically 6-edge-connected hypohamiltonian snark.  We conclude this section with the following two problems motivated by Theorem~\ref{thm:irredsnarks-cyc56} and results of Kochol~\cite{Ko96-2,Ko04}.

\begin{problem}[M\'a\v{c}ajov\'a and \v{S}koviera~\cite{MS06}]
Construct a cyclically $6$-edge-connected snark (irreducible or not) of order smaller than~$118$ and different from any of Isaacs' snarks.
\end{problem}

\begin{problem}
Determine all orders for which cyclically $6$-edge-connected snarks exist.
\end{problem}

\section{On a Conjecture of Steffen on Hypohamiltonian Snarks}
\label{sect:conjecture_steffen}

Consider a cubic graph $G$. We denote with $\mu_k(G)$ the minimum number of edges not contained in the union of $k$ 1-factors of $G$, for every possible combination of $k$ 1-factors. If $\mu_3(G) = 0$, then $G$ is 3-edge-colourable. In~\cite{steffen20151}, Steffen made the following conjecture on hypohamiltonian snarks.

\begin{conjecture}[Steffen, Conjecture 4.1 in~\cite{steffen20151}]\label{conj:steffen}
If $G$ is a hypohamiltonian snark, then $\mu_3(G) = 3$.
\end{conjecture}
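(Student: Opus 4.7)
The plan is to prove $\mu_3(G) = 3$ for every hypohamiltonian snark $G$ by establishing the two inequalities $\mu_3(G) \leq 3$ and $\mu_3(G) \geq 3$ separately. The weak lower bound $\mu_3(G) \geq 2$ is immediate: for any perfect matching $M_1$ of $G$ (which exists since snarks are bridgeless cubic), the $2$-factor $G - M_1$ contains an even positive number of odd cycles---even because any $2$-factor of a cubic graph has an even number of odd cycles, and positive because $G$ is a snark---and on each odd cycle two further matchings $M_2, M_3$ must leave at least one edge uncovered, since an odd cycle is not $2$-edge-colourable. It therefore remains to boost this lower bound by one and to establish the matching upper bound.

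For the upper bound $\mu_3(G) \leq 3$, the plan is a direct construction using hypohamiltonicity. Fix a vertex $v$ with $N(v) = \{a, b, c\}$ and let $C_v$ be a hamiltonian cycle of $G - v$. One has the decomposition $E(G) = E(C_v) \sqcup \{va, vb, vc\} \sqcup M_v$, where $M_v$ is a perfect matching on $V(G) \setminus \{v, a, b, c\}$ of size $n/2 - 2$: because $a, b, c$ have degree $2$ in $G - v$, both of their $G-v$-edges lie on the hamiltonian cycle, so they contribute no ``chord'' to $M_v$. I would then construct three perfect matchings $M_1, M_2, M_3$ of $G$ whose restrictions to $C_v$ together cover almost all of $E(C_v)$: two of the matchings restricted to $C_v$ are near-perfect matchings of the odd cycle (covering all but one vertex each), contributing $n - 2$ cycle-edges total, and the third matching supplies the missing cycle-edge together with the chords of $M_v$ and one of $va, vb, vc$. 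A case analysis on the cyclic positions of $a, b, c$ along $C_v$ and the $M_v$-partners of their cycle-neighbours should show that these partial matchings can always be completed to perfect matchings of $G$ leaving exactly three uncovered edges.

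For the sharp lower bound $\mu_3(G) \geq 3$, the delicate step is to exclude configurations with exactly two uncovered edges. Suppose three perfect matchings $M_1, M_2, M_3$ satisfy $|E(G) \setminus (M_1 \cup M_2 \cup M_3)| = 2$. Then $G - M_1$ has exactly two odd cycles $C_1, C_2$, and $M_2 \cup M_3$ covers all of $G - M_1$ except for single edges $e_i \in C_i$ for $i = 1, 2$. The restriction of $M_2 \triangle M_3$ to $G - M_1 - \{e_1, e_2\}$ is then a proper $2$-edge-colouring of a union of paths and even cycles, and combined with $M_1$ it yields a $3$-edge-colouring of $G - \{e_1, e_2\}$. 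I would argue, using the bicriticality of $G$ established by Steffen~\cite{St98} together with an alternating-path analysis along $M_1 \cup M_2 \cup M_3$, that this near-colouring extends to a proper $3$-edge-colouring of $G$ itself, contradicting the snark assumption: bicriticality yields $3$-edge-colourings of $G$ minus any two vertices, and a Petersen-like parity obstruction should forbid the specific configuration of $e_1$ and $e_2$ occurring in a hypohamiltonian snark.

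The main obstacle is this lower bound, as the upper bound should follow from a direct if intricate construction anchored at $C_v$, while the lower bound demands a structural use of hypohamiltonicity that goes beyond the oddness/snark machinery and captures the extra ``$+1$'' separating $\mu_3$ from the oddness. An inductive alternative would be to prove that the dot product of Theorem~\ref{theorem:gz} preserves $\mu_3 = 3$: if both factors $G_1, G_2$ are hypohamiltonian snarks with $\mu_3(G_i) = 3$, then so is $G_1 \cdot G_2$, by combining their respective triples of uncovered edges through the construction. Together with the computational verification for small orders carried out in this paper, this would establish the conjecture for all hypohamiltonian snarks obtainable as iterated dot products, but would still leave the ``dot-product-irreducible'' cases to be treated by the direct structural approach described above.
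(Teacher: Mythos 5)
First, a point of orientation: the statement you set out to prove is Conjecture~\ref{conj:steffen}, which is \emph{open}. The paper does not prove it --- it only reports a computational verification (no counterexamples among hypohamiltonian snarks on at most 36 vertices, and none among those snarks on at most 44 vertices that arise as dot products of two hypohamiltonian snarks). So your proposal must be judged as an attack on an open problem, and it contains a concrete, fatal gap in the part that carries all the difficulty, namely the upper bound $\mu_3(G) \le 3$.

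Your decomposition $E(G) = E(C_v) \sqcup \{va,vb,vc\} \sqcup M_v$ is correct, but the three matchings you describe cannot exist in a graph of girth at least 5. If $M_i \cap E(C_v)$ ($i=1,2$) is a near-perfect matching of the odd cycle $C_v$, then $M_i$ is forced to consist of exactly those cycle edges plus one edge $vx_i$ with $x_i \in \{a,b,c\}$: the near-perfect matching misses exactly one vertex of $C_v$, and that vertex must be the one matched to $v$; in particular $M_1$ and $M_2$ contain \emph{no} chord of $M_v$. Hence all $n/2-2$ chords must be covered by $M_3$ alone. But a perfect matching $M_3 \supseteq M_v$ contains exactly one edge at $v$, say $vc$, and then $M_v \cup \{vc\}$ covers every vertex except $a$ and $b$; completing it would force $ab \in E(G)$, i.e.\ a triangle $vab$, which is impossible since $G$ has girth at least~5. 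So $M_3$ must omit chords; every omitted chord is uncovered (as $M_1,M_2$ are chord-free), and rerouting $M_3$ through cycle edges at $a$ and $b$ makes the omissions cascade along alternating cycle/chord paths. Nothing in the construction bounds the number of uncovered edges by 3 --- this is not a case analysis left to finish, but a scheme structurally incompatible with girth $\ge 5$. Two further remarks: the sharp lower bound $\mu_3(G)\ge 3$ for snarks is indeed true, but not by your sketched route (bicriticality plus an unspecified ``Petersen-like parity obstruction''); the standard argument looks at the \emph{core}, the subgraph formed by the uncovered and the multiply covered edges of $M_1\cup M_2\cup M_3$: with exactly two uncovered edges the core would be a 4-cycle alternating between uncovered and doubly covered edges, contradicting girth $\ge 5$ (cf.~\cite{steffen20151}). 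And your inductive alternative --- that the dot product of Theorem~\ref{theorem:gz} preserves $\mu_3 = 3$ --- is itself unproven in the proposal and, as you concede, could never close the conjecture on its own: there exist hypohamiltonian snarks of cyclic edge-connectivity 5 and 6 (the Petersen graph, the flower snarks, and infinitely many more), and these are not dot products, so they would remain untreated. What the paper actually contributes on this statement is precisely the computational evidence recorded in its two observations and Table~\ref{table:numhypo}, nothing more.
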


If true, this conjecture would have significant consequences, e.g.\ by Theorem~2.14 from~\cite{steffen20151}, it would imply that every hypohamiltonian snark has a Berge-cover (a bridgeless cubic graph $G$ has a \emph{Berge-cover} if $\mu_5(G) = 0$).

We wrote a computer program for computing $\mu_3(G)$ and tested Conjecture~\ref{conj:steffen} on the complete lists of hypohamiltonian snarks up to 36~vertices. This leads to the following observation.

\begin{observation}
There are no counterexamples to Conjecture~\ref{conj:steffen} among the hypohamiltonian snarks with at most 36 vertices.
\end{observation}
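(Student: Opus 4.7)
The plan is to verify the claim by exhaustive computation over the known complete lists of hypohamiltonian snarks. First I would take the full catalogue of hypohamiltonian snarks of order at most $36$ enumerated in~\cite{BGHM13} (the counts are recorded in Table~\ref{table:numhypo} and these lists are available from the \textit{House of Graphs}~\cite{hog}). For each such graph $G$ one then has to compute $\mu_3(G)$ directly from its definition and confirm that the value is $3$.

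The computation of $\mu_3(G)$ proceeds in two stages. First, enumerate all 1-factors of $G$ by a straightforward backtracking search: at each step pick the unmatched vertex of lowest index and branch on its (at most three) unmatched neighbours, backtracking whenever the residual graph visibly has no perfect matching. Second, iterate over unordered triples $(M_1,M_2,M_3)$ of 1-factors and record
\[
\mu(M_1,M_2,M_3) \;=\; |E(G)| - |M_1 \cup M_2 \cup M_3|,
\]
retaining the minimum. Writing $n_i$ for the number of edges of $G$ contained in exactly $i$ of the three matchings, the identity $n_0 + n_1 + n_2 + n_3 = 3n/2 = n_1+2n_2+3n_3$ gives $\mu(M_1,M_2,M_3) = n_0 = n_2 + 2n_3$, so the quantity can equivalently be read off from the multiply-covered edges, which is often convenient in an implementation.

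Two speed-ups make the search practical for the larger orders in the table. Since a hypohamiltonian snark is not $3$-edge-colourable we automatically have $\mu_3(G)\ge 1$, and as soon as any triple achieving value $3$ is found one may prune: abandon any partial triple whose first two matchings $M_1,M_2$ already leave more than $3$ edges outside $M_1\cup M_2$ (since adding $M_3$ can only cover $n/2$ further positions and the number of uncovered edges never decreases below $|E(G)\setminus(M_1\cup M_2)|-|M_3\setminus(M_1\cup M_2)|$). In addition, the automorphism group of each graph can be used to factor out orbits of ordered pairs or triples of matchings, which is especially effective for highly symmetric snarks such as the flower snarks.

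Finally, to guard against implementation errors I would cross-check the program on graphs whose $\mu_3$ is known or easy to verify by hand: the Petersen graph and several flower snarks (for which the expected value $3$ can be checked independently), together with a sample of $3$-edge-colourable cubic graphs (where the program must return $0$). The main obstacle is simply the running time on the tens of thousands of $34$- and $36$-vertex snarks; however, because we only need to certify the upper bound $\mu_3(G)\le 3$ via one explicit triple and then to show that no triple achieves $0$, $1$, or $2$, the heavy pruning above keeps the total computation within a reasonable budget.
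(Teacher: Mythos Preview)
Your approach is exactly the paper's: an exhaustive computer check of $\mu_3$ over the complete lists of hypohamiltonian snarks on at most $36$ vertices from~\cite{BGHM13}. The paper gives no further detail, so your outline is, if anything, more explicit.

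One concrete slip: your stated pruning rule ``abandon any pair $M_1,M_2$ that leaves more than $3$ edges outside $M_1\cup M_2$'' is vacuous and unsound. For any cubic $G$ on $n$ vertices one has $|E(G)\setminus(M_1\cup M_2)|\ge \tfrac{3n}{2}-n=\tfrac{n}{2}\ge 5$, so every pair would be abandoned and the search for a triple with value $\le 2$ would never run; the program would then falsely report $\mu_3=3$ even if some $G$ had $\mu_3\le 2$. The correct bound-based prune is to abandon a pair only when $|E(G)\setminus(M_1\cup M_2)|>\tfrac{n}{2}+2$ (so that even a disjoint $M_3$ cannot bring the uncovered count to $2$ or below). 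With that fix, the plan is sound.
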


The authors of~\cite{BGHM13} noted a huge increase (from 13 to 31~198) in the number of hypohamiltonian snarks from order~32 to 34, see Table~\ref{table:numhypo}. Using a computer, we were able to determine that 29~365 out of the 29~701 hypohamiltonian snarks with cyclic edge-connectivity~4 on 34~vertices can be obtained by performing a dot product on a hypohamiltonian snark on 26~vertices and the Petersen graph. We also determined that the remaining hypohamiltonian snarks with cyclic edge-connectivity~4 on 34~vertices are obtained by performing a dot product on the Blanu\v{s}a snarks. Intriguingly, our computations show that some hypohamiltonian snarks can be obtained by performing a dot product on a hypohamiltonian snark on 26~vertices and the Petersen graph, as well as by performing a dot product on the Blanu\v{s}a snarks.

There is also a (slightly less dramatic) increase in the cyclically 5-edge-connected case---these are obviously not dot products---and we believe it to be due to more general graph products, for instance ``superposition'' introduced by Kochol~\cite{Ko96}. It would be interesting to further explore these transformations in order to fully understand these sudden increases and decreases in numbers.

Using a computer, we determined that all hypohamiltonian snarks with cyclic edge-connectivity~4 up to 36~vertices can be obtained by performing a dot product on two hypohamiltonian snarks. This leads us to pose the following question.

\begin{problem}
Is every hypohamiltonian snark with cyclic edge-connectivity 4 a dot product of two hypohamiltonian snarks?
\end{problem}

Recall that in Theorem~\ref{theorem:gz} the graphs $G$ and $H$ are hypohamiltonian, but the theorem is \emph{not} ``if and only if'', since although the Petersen graph does not contain a pair of suitable edges, the Blanu\v{s}a snarks (which are dot products of two Petersen graphs) are hypohamiltonian. We believe the answer to this problem to be ``no'' due to the following observation. In order to cover all cases, we would need to add to condition~(ii) of Theorem~\ref{theorem:gz} the possibility of $((a,b),(c,d))$ being good in $G - \{ v, ab, cd \}$. However, we would then need to require from $H$ that it contains a 2-factor containing exactly two (necessarily odd) cycles. Although we were unable to find a counter-example, we believe that there exist hypohamiltonian snarks which do not possess such a 2-factor. 

We also determined all hypohamiltonian snarks up to 44 vertices which can be obtained by performing a dot product on two hypohamiltonian snarks. The counts of these snarks can be found in the fourth column of Table~\ref{table:numhypo}. We also verified Conjecture~\ref{conj:steffen} on these snarks.

\begin{observation}
There are no counterexamples to Conjecture~\ref{conj:steffen} among the hypohamiltonian snarks with at most 44 vertices which are a dot product of two hypohamiltonian snarks.
\end{observation}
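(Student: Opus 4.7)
The approach is a direct computer verification, building on the infrastructure already described earlier in Section~\ref{sect:conjecture_steffen}. The preceding paragraph states that the complete list $\mathcal L$ of hypohamiltonian snarks on at most $44$ vertices which arise as a dot product of two hypohamiltonian snarks has been generated, with orders counted in the fourth column of Table~\ref{table:numhypo}. Since the previous observation already verifies Conjecture~\ref{conj:steffen} on the complete lists of hypohamiltonian snarks up to $36$ vertices, it suffices to compute $\mu_3(G)$ for each $G\in\mathcal L$ of order $n\in\{38,40,42,44\}$ and confirm that the value is~$3$.

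Computing $\mu_3(G)$ for a fixed cubic graph $G$ amounts to finding three $1$-factors $M_1,M_2,M_3$ that maximise $|M_1\cup M_2\cup M_3|$, or equivalently, minimise the defect $|E(G)|-|M_1\cup M_2\cup M_3|$. The plan is: (i)~enumerate all $1$-factors of $G$ by recursive backtracking (at each step pick the lowest-indexed unsaturated vertex and branch on its three possible partners), (ii)~precompute bit-parallel representations so that unions of matchings are computed in a handful of machine words, and (iii)~iterate over triples $(M_1,M_2,M_3)$ with aggressive pruning: once a triple of defect~$3$ is discovered one only has to rule out defect~$0$, $1$, or~$2$, and most candidates are discarded by short popcount tests on the cached bitsets. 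Symmetries in $\mathrm{Aut}(G)$ are exploited to fix $M_1$ up to the group action, typically shrinking the enumeration by a significant factor.

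The main obstacle is scalability rather than correctness: for $n=42$ and $n=44$ the list $\mathcal L$ contains on the order of $10^7$ graphs (see Table~\ref{table:numhypo}), each of which may carry many thousands of $1$-factors. To keep the total running time manageable I would parallelise the computation across $\mathcal L$, since $\mu_3$ depends only on the individual graph, and use a low-level implementation (e.g.\ in~C) with compact bitset data structures. As a sanity check, the result should be reproduced with an independent implementation---for instance by encoding the optimisation as an ILP and running a solver on a random sample drawn from $\mathcal L$---and the totals counted by the program should be compared against the fourth column of Table~\ref{table:numhypo}. Applied to every $G \in \mathcal L$ the procedure yields $\mu_3(G)=3$, establishing the observation.
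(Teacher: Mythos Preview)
Your proposal is correct and takes essentially the same approach as the paper: the observation is simply the outcome of running the $\mu_3$-program on the precomputed list~$\mathcal L$, and the paper states it without any proof beyond the sentence ``We also verified Conjecture~\ref{conj:steffen} on these snarks.'' Your write-up supplies algorithmic and sanity-check details that the paper omits, but the underlying method is identical.
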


\section*{Acknowledgements}
We thank Nico Van Cleemput for providing us with a script which greatly enhanced the quality of our figures. We would also like to thank Martin \v{S}koviera for informing us about the equivalence of irreducible and vertex-critical graphs. Finally, we also wish to thank Eckhard Steffen for useful suggestions.



\bibliographystyle{amcjoucc}
\bibliography{references}

\newpage

\section*{Appendix}

Below we give the technical details which were left out in the proof of Lemma~\ref{lem:fixed}.

\bigskip

\noindent
\textbf{Proof that $b_0c_0$ and $b_4c_4$ are suitable edges for $J_{9}$ (continued)}

We will now prove that at least one of $(0,26)$ or $(11,12)$ is good in $J_9 - \{v,(0,26),$ $(11,12)\}$ for every $v \in V(J_9)$ except for $v = 0$, which we have already shown above in the proof of Lemma~\ref{lem:fixed}.

\begin{itemize}
{\footnotesize
\item $v = 1$: 26, 4, 5, 6, 7, 2, 3, 28, 29, 30, 31, 12, 13, 8, 9, 10, 11, 16, 17, 18, 33, 32, 15, 14, 19, 20, 25, 24, 23, 22, 21, 34, 35, 27, 0
\item $v = 2$: 12, 31, 32, 15, 14, 13, 8, 7, 6, 5, 10, 9, 30, 29, 28, 3, 4, 26, 25, 24, 23, 1, 0, 27, 35, 34, 33, 18, 19, 20, 21, 22, 17, 16, 11
\item $v = 3$: 26, 4, 5, 6, 7, 2, 1, 23, 22, 21, 20, 25, 24, 35, 34, 33, 32, 15, 14, 19, 18, 17, 16, 11, 10, 9, 8, 13, 12, 31, 30, 29, 28, 27, 0
\item $v = 4$: 26, 25, 20, 19, 18, 17, 22, 21, 34, 33, 32, 31, 12, 13, 14, 15, 16, 11, 10, 5, 6, 7, 8, 9, 30, 29, 28, 3, 2, 1, 23, 24, 35, 27, 0
\item $v = 5$: 26, 4, 3, 2, 1, 23, 22, 21, 20, 25, 24, 35, 34, 33, 32, 15, 14, 19, 18, 17, 16, 11, 10, 9, 30, 31, 12, 13, 8, 7, 6, 29, 28, 27, 0
\item $v = 6$: 26, 25, 20, 19, 18, 17, 22, 21, 34, 33, 32, 31, 12, 13, 14, 15, 16, 11, 10, 5, 4, 3, 2, 7, 8, 9, 30, 29, 28, 27, 35, 24, 23, 1, 0
\item $v = 7$: 26, 4, 5, 6, 29, 30, 31, 12, 13, 8, 9, 10, 11, 16, 17, 18, 19, 14, 15, 32, 33, 34, 35, 24, 25, 20, 21, 22, 23, 1, 2, 3, 28, 27, 0
\item $v = 8$: 26, 25, 20, 19, 18, 17, 22, 21, 34, 33, 32, 31, 12, 13, 14, 15, 16, 11, 10, 9, 30, 29, 28, 3, 4, 5, 6, 7, 2, 1, 23, 24, 35, 27, 0
\item $v = 9$: 26, 4, 3, 2, 1, 23, 22, 21, 20, 25, 24, 35, 34, 33, 32, 15, 14, 19, 18, 17, 16, 11, 10, 5, 6, 7, 8, 13, 12, 31, 30, 29, 28, 27, 0
\item $v = 10$: 12, 13, 14, 15, 32, 31, 30, 9, 8, 7, 2, 3, 28, 29, 6, 5, 4, 26, 25, 24, 23, 1, 0, 27, 35, 34, 33, 18, 19, 20, 21, 22, 17, 16, 11
\item $v = 11$: 26, 4, 3, 2, 1, 23, 22, 21, 34, 35, 24, 25, 20, 19, 14, 15, 16, 17, 18, 33, 32, 31, 12, 13, 8, 7, 6, 5, 10, 9, 30, 29, 28, 27, 0
\item $v = 12$: 26, 25, 20, 19, 18, 17, 22, 21, 34, 33, 32, 31, 30, 9, 8, 13, 14, 15, 16, 11, 10, 5, 4, 3, 2, 7, 6, 29, 28, 27, 35, 24, 23, 1, 0
\item $v = 13$: 12, 31, 30, 29, 6, 5, 10, 9, 8, 7, 2, 1, 0, 27, 28, 3, 4, 26, 25, 20, 21, 22, 23, 24, 35, 34, 33, 32, 15, 14, 19, 18, 17, 16, 11
\item $v = 14$: 26, 25, 20, 19, 18, 17, 22, 21, 34, 33, 32, 15, 16, 11, 10, 9, 8, 13, 12, 31, 30, 29, 28, 3, 4, 5, 6, 7, 2, 1, 23, 24, 35, 27, 0
\item $v = 15$: 26, 4, 3, 2, 1, 23, 22, 21, 20, 25, 24, 35, 34, 33, 32, 31, 12, 13, 14, 19, 18, 17, 16, 11, 10, 5, 6, 7, 8, 9, 30, 29, 28, 27, 0
\item $v = 16$: 12, 13, 8, 7, 2, 3, 28, 29, 6, 5, 4, 26, 25, 24, 23, 1, 0, 27, 35, 34, 33, 18, 17, 22, 21, 20, 19, 14, 15, 32, 31, 30, 9, 10, 11
\item $v = 17$: 26, 4, 3, 2, 1, 23, 22, 21, 20, 25, 24, 35, 34, 33, 18, 19, 14, 13, 12, 31, 32, 15, 16, 11, 10, 5, 6, 7, 8, 9, 30, 29, 28, 27, 0
\item $v = 18$: 26, 25, 20, 19, 14, 15, 32, 33, 34, 21, 22, 17, 16, 11, 10, 9, 8, 13, 12, 31, 30, 29, 28, 3, 4, 5, 6, 7, 2, 1, 23, 24, 35, 27, 0
\item $v = 19$: 26, 4, 3, 28, 27, 35, 34, 21, 20, 25, 24, 23, 22, 17, 18, 33, 32, 31, 12, 13, 14, 15, 16, 11, 10, 5, 6, 29, 30, 9, 8, 7, 2, 1, 0
\item $v = 20$: 26, 25, 24, 35, 27, 28, 29, 6, 5, 4, 3, 2, 7, 8, 13, 12, 31, 30, 9, 10, 11, 16, 17, 18, 19, 14, 15, 32, 33, 34, 21, 22, 23, 1, 0
\item $v = 21$: 26, 4, 3, 2, 1, 23, 22, 17, 18, 19, 20, 25, 24, 35, 34, 33, 32, 31, 12, 13, 14, 15, 16, 11, 10, 5, 6, 7, 8, 9, 30, 29, 28, 27, 0
\item $v = 22$: 26, 25, 20, 21, 34, 33, 32, 15, 14, 19, 18, 17, 16, 11, 10, 9, 8, 13, 12, 31, 30, 29, 28, 3, 4, 5, 6, 7, 2, 1, 23, 24, 35, 27, 0
\item $v = 23$: 26, 4, 3, 28, 27, 35, 24, 25, 20, 19, 18, 17, 22, 21, 34, 33, 32, 31, 12, 13, 14, 15, 16, 11, 10, 5, 6, 29, 30, 9, 8, 7, 2, 1, 0
\item $v = 24$: 26, 25, 20, 19, 14, 15, 32, 33, 18, 17, 16, 11, 10, 9, 8, 13, 12, 31, 30, 29, 28, 3, 4, 5, 6, 7, 2, 1, 23, 22, 21, 34, 35, 27, 0
\item $v = 25$: 26, 4, 3, 2, 1, 23, 24, 35, 34, 33, 18, 17, 22, 21, 20, 19, 14, 13, 12, 31, 32, 15, 16, 11, 10, 5, 6, 7, 8, 9, 30, 29, 28, 27, 0
\item $v = 26$: 12, 13, 8, 7, 2, 3, 4, 5, 6, 29, 28, 27, 0, 1, 23, 22, 21, 34, 35, 24, 25, 20, 19, 14, 15, 16, 17, 18, 33, 32, 31, 30, 9, 10, 11
\item $v = 27$: 26, 4, 5, 6, 7, 2, 3, 28, 29, 30, 31, 12, 13, 8, 9, 10, 11, 16, 17, 18, 19, 14, 15, 32, 33, 34, 35, 24, 25, 20, 21, 22, 23, 1, 0
\item $v = 28$: 12, 13, 14, 15, 32, 31, 30, 29, 6, 5, 10, 9, 8, 7, 2, 3, 4, 26, 25, 24, 23, 1, 0, 27, 35, 34, 33, 18, 19, 20, 21, 22, 17, 16, 11
\item $v = 29$: 26, 4, 5, 6, 7, 8, 13, 12, 31, 30, 9, 10, 11, 16, 17, 18, 19, 14, 15, 32, 33, 34, 35, 24, 25, 20, 21, 22, 23, 1, 2, 3, 28, 27, 0
\item $v = 30$: 26, 25, 20, 19, 18, 17, 22, 21, 34, 33, 32, 31, 12, 13, 14, 15, 16, 11, 10, 9, 8, 7, 2, 3, 4, 5, 6, 29, 28, 27, 35, 24, 23, 1, 0
\item $v = 31$: 12, 13, 8, 7, 6, 5, 10, 9, 30, 29, 28, 27, 0, 1, 2, 3, 4, 26, 25, 20, 21, 22, 23, 24, 35, 34, 33, 32, 15, 14, 19, 18, 17, 16, 11
\item $v = 32$: 26, 25, 24, 23, 1, 2, 7, 6, 5, 4, 3, 28, 29, 30, 31, 12, 13, 8, 9, 10, 11, 16, 15, 14, 19, 20, 21, 22, 17, 18, 33, 34, 35, 27, 0
\item $v = 33$: 26, 4, 3, 28, 27, 35, 34, 21, 20, 25, 24, 23, 22, 17, 18, 19, 14, 13, 12, 31, 32, 15, 16, 11, 10, 5, 6, 29, 30, 9, 8, 7, 2, 1, 0
\item $v = 34$: 26, 25, 24, 35, 27, 28, 29, 6, 5, 4, 3, 2, 7, 8, 13, 12, 31, 30, 9, 10, 11, 16, 17, 18, 33, 32, 15, 14, 19, 20, 21, 22, 23, 1, 0
\item $v = 35$: 26, 4, 3, 2, 1, 23, 24, 25, 20, 19, 18, 17, 22, 21, 34, 33, 32, 31, 12, 13, 14, 15, 16, 11, 10, 5, 6, 7, 8, 9, 30, 29, 28, 27, 0}
\end{itemize}

\noindent
\textbf{Proof that $b_0c_0$ and $b_4c_4$ are suitable edges for $J_{11}$}

Figure~\ref{fig:J11} shows the flower snark $J_{11}$ and here $b_0c_0$ and $b_4c_4$ correspond to the edges $(0,32)$ and $(11, 12)$, respectively.

$(0,11), (0,12), (32,11)$ and $(32,12)$ are good in $J_{11} - \{(0,32),(11,12)\}$ due to the following hamiltonian paths, respectively:

\begin{itemize}
{\small
\item 11, 10, 5, 6, 7, 8, 9, 36, 35, 34, 33, 43, 30, 29, 28, 23, 22, 17, 16, 15, 38, 37, 12, 13, 14, 19, 18, 39, 40, 21, 20, 25, 24, 41, 42, 27, 26, 31, 32, 4, 3, 2, 1, 0
\item 12, 13, 14, 15, 16, 11, 10, 5, 4, 32, 31, 26, 25, 24, 41, 42, 27, 28, 23, 22, 17, 18, 19, 20, 21, 40, 39, 38, 37, 36, 9, 8, 7, 6, 35, 34, 3, 2, 1, 29, 30, 43, 33, 0
\item 11, 10, 5, 4, 3, 2, 1, 0, 33, 34, 35, 6, 7, 8, 9, 36, 37, 12, 13, 14, 19, 18, 17, 16, 15, 38, 39, 40, 41, 24, 25, 20, 21, 22, 23, 28, 29, 30, 43, 42, 27, 26, 31, 32
\item 12, 13, 8, 7, 2, 3, 4, 5, 6, 35, 34, 33, 0, 1, 29, 28, 23, 22, 17, 18, 39, 38, 37, 36, 9, 10, 11, 16, 15, 14, 19, 20, 21, 40, 41, 24, 25, 26, 27, 42, 43, 30, 31, 32}
\end{itemize}

$(0,32),(11,12))$ is good in $J_{11} - \{(0,32),(11,12)\}$ due to the following two disjoint paths with end-vertices $0$ and $32$, and $11$ and $12$, respectively, which together span $J_{11}$.

\begin{itemize}
{\small
\item 32, 31, 26, 25, 20, 19, 14, 13, 8, 7, 2, 1, 0
\item 12, 37, 38, 15, 16, 17, 18, 39, 40, 21, 22, 23, 24, 41, 42, 27, 28, 29, 30, 43, 33, 34, 3, 4, 5, 6, 35, 36, 9, 10, 11}
\end{itemize}

The following hamiltonian paths show that at least one of $(0,32)$ or $(11,12)$ is good in $J_{11} - \{v,(0,32),(11,12)\}$ for every $v \in V(J_{11})$.

\begin{itemize}
{\footnotesize
\item $v = 0$: 12, 13, 14, 15, 38, 37, 36, 35, 6, 5, 10, 9, 8, 7, 2, 1, 29, 30, 31, 32, 4, 3, 34, 33, 43, 42, 41, 24, 23, 28, 27, 26, 25, 20, 19, 18, 39, 40, 21, 22, 17, 16, 11
\item $v = 1$: 32, 4, 5, 6, 7, 2, 3, 34, 35, 36, 37, 12, 13, 8, 9, 10, 11, 16, 17, 18, 19, 14, 15, 38, 39, 40, 41, 24, 23, 22, 21, 20, 25, 26, 31, 30, 29, 28, 27, 42, 43, 33, 0
\item $v = 2$: 12, 37, 38, 15, 14, 13, 8, 7, 6, 5, 10, 9, 36, 35, 34, 3, 4, 32, 31, 30, 29, 1, 0, 33, 43, 42, 41, 24, 23, 28, 27, 26, 25, 20, 19, 18, 39, 40, 21, 22, 17, 16, 11
\item $v = 3$: 32, 4, 5, 6, 7, 2, 1, 29, 28, 27, 26, 31, 30, 43, 42, 41, 40, 21, 22, 23, 24, 25, 20, 19, 14, 15, 38, 39, 18, 17, 16, 11, 10, 9, 8, 13, 12, 37, 36, 35, 34, 33, 0
\item $v = 4$: 32, 31, 26, 25, 24, 23, 28, 27, 42, 41, 40, 39, 18, 17, 22, 21, 20, 19, 14, 13, 12, 37, 38, 15, 16, 11, 10, 5, 6, 7, 8, 9, 36, 35, 34, 3, 2, 1, 29, 30, 43, 33, 0
\item $v = 5$: 32, 4, 3, 2, 1, 29, 28, 27, 26, 31, 30, 43, 42, 41, 40, 21, 22, 23, 24, 25, 20, 19, 14, 15, 38, 39, 18, 17, 16, 11, 10, 9, 36, 37, 12, 13, 8, 7, 6, 35, 34, 33, 0
\item $v = 6$: 32, 31, 26, 25, 24, 23, 28, 27, 42, 41, 40, 39, 18, 17, 22, 21, 20, 19, 14, 13, 12, 37, 38, 15, 16, 11, 10, 5, 4, 3, 2, 7, 8, 9, 36, 35, 34, 33, 43, 30, 29, 1, 0
\item $v = 7$: 32, 4, 5, 6, 35, 36, 37, 12, 13, 8, 9, 10, 11, 16, 17, 18, 19, 14, 15, 38, 39, 40, 41, 24, 23, 22, 21, 20, 25, 26, 31, 30, 29, 28, 27, 42, 43, 33, 34, 3, 2, 1, 0
\item $v = 8$: 32, 31, 26, 25, 24, 23, 28, 27, 42, 41, 40, 39, 18, 17, 22, 21, 20, 19, 14, 13, 12, 37, 38, 15, 16, 11, 10, 9, 36, 35, 34, 3, 4, 5, 6, 7, 2, 1, 29, 30, 43, 33, 0
\item $v = 9$: 32, 4, 3, 2, 1, 29, 28, 27, 26, 31, 30, 43, 42, 41, 40, 21, 22, 23, 24, 25, 20, 19, 14, 15, 38, 39, 18, 17, 16, 11, 10, 5, 6, 7, 8, 13, 12, 37, 36, 35, 34, 33, 0
\item $v = 10$: 12, 13, 14, 15, 38, 37, 36, 9, 8, 7, 2, 3, 34, 35, 6, 5, 4, 32, 31, 30, 29, 1, 0, 33, 43, 42, 41, 24, 23, 28, 27, 26, 25, 20, 19, 18, 39, 40, 21, 22, 17, 16, 11
\item $v = 11$: 32, 4, 3, 2, 1, 29, 28, 27, 26, 31, 30, 43, 42, 41, 40, 21, 20, 25, 24, 23, 22, 17, 16, 15, 14, 19, 18, 39, 38, 37, 12, 13, 8, 7, 6, 5, 10, 9, 36, 35, 34, 33, 0
\item $v = 12$: 32, 31, 26, 25, 24, 23, 28, 27, 42, 41, 40, 39, 18, 17, 22, 21, 20, 19, 14, 13, 8, 9, 10, 11, 16, 15, 38, 37, 36, 35, 34, 3, 4, 5, 6, 7, 2, 1, 29, 30, 43, 33, 0
\item $v = 13$: 12, 37, 36, 35, 6, 5, 10, 9, 8, 7, 2, 1, 0, 33, 34, 3, 4, 32, 31, 26, 27, 28, 29, 30, 43, 42, 41, 40, 21, 22, 23, 24, 25, 20, 19, 14, 15, 38, 39, 18, 17, 16, 11
\item $v = 14$: 32, 31, 26, 25, 24, 41, 42, 27, 28, 23, 22, 17, 18, 19, 20, 21, 40, 39, 38, 15, 16, 11, 10, 9, 8, 13, 12, 37, 36, 35, 34, 3, 4, 5, 6, 7, 2, 1, 29, 30, 43, 33, 0
\item $v = 15$: 32, 4, 3, 2, 1, 29, 28, 27, 26, 31, 30, 43, 42, 41, 40, 21, 22, 23, 24, 25, 20, 19, 14, 13, 12, 37, 38, 39, 18, 17, 16, 11, 10, 5, 6, 7, 8, 9, 36, 35, 34, 33, 0
\item $v = 16$: 12, 13, 8, 7, 2, 3, 34, 35, 6, 5, 4, 32, 31, 30, 29, 1, 0, 33, 43, 42, 41, 24, 25, 26, 27, 28, 23, 22, 17, 18, 39, 40, 21, 20, 19, 14, 15, 38, 37, 36, 9, 10, 11
\item $v = 17$: 32, 4, 3, 2, 1, 29, 28, 27, 26, 31, 30, 43, 42, 41, 40, 21, 22, 23, 24, 25, 20, 19, 18, 39, 38, 37, 12, 13, 14, 15, 16, 11, 10, 5, 6, 7, 8, 9, 36, 35, 34, 33, 0
\item $v = 18$: 32, 31, 26, 25, 24, 23, 28, 27, 42, 41, 40, 39, 38, 15, 14, 19, 20, 21, 22, 17, 16, 11, 10, 9, 8, 13, 12, 37, 36, 35, 34, 3, 4, 5, 6, 7, 2, 1, 29, 30, 43, 33, 0
\item $v = 19$: 32, 4, 3, 2, 1, 29, 28, 27, 26, 31, 30, 43, 42, 41, 40, 21, 20, 25, 24, 23, 22, 17, 18, 39, 38, 37, 12, 13, 14, 15, 16, 11, 10, 5, 6, 7, 8, 9, 36, 35, 34, 33, 0
\item $v = 20$: 32, 31, 26, 25, 24, 41, 42, 27, 28, 23, 22, 21, 40, 39, 38, 15, 14, 19, 18, 17, 16, 11, 10, 9, 8, 13, 12, 37, 36, 35, 34, 3, 4, 5, 6, 7, 2, 1, 29, 30, 43, 33, 0
\item $v = 21$: 32, 4, 3, 2, 1, 29, 28, 27, 26, 31, 30, 43, 42, 41, 40, 39, 18, 17, 22, 23, 24, 25, 20, 19, 14, 13, 12, 37, 38, 15, 16, 11, 10, 5, 6, 7, 8, 9, 36, 35, 34, 33, 0
\item $v = 22$: 32, 31, 26, 25, 24, 23, 28, 27, 42, 41, 40, 21, 20, 19, 14, 15, 38, 39, 18, 17, 16, 11, 10, 9, 8, 13, 12, 37, 36, 35, 34, 3, 4, 5, 6, 7, 2, 1, 29, 30, 43, 33, 0
\item $v = 23$: 32, 4, 3, 2, 1, 29, 28, 27, 26, 31, 30, 43, 42, 41, 24, 25, 20, 19, 18, 17, 22, 21, 40, 39, 38, 37, 12, 13, 14, 15, 16, 11, 10, 5, 6, 7, 8, 9, 36, 35, 34, 33, 0
\item $v = 24$: 32, 31, 26, 25, 20, 21, 22, 23, 28, 27, 42, 41, 40, 39, 38, 15, 14, 19, 18, 17, 16, 11, 10, 9, 8, 13, 12, 37, 36, 35, 34, 3, 4, 5, 6, 7, 2, 1, 29, 30, 43, 33, 0
\item $v = 25$: 32, 4, 3, 2, 1, 29, 28, 27, 26, 31, 30, 43, 42, 41, 24, 23, 22, 17, 18, 19, 20, 21, 40, 39, 38, 37, 12, 13, 14, 15, 16, 11, 10, 5, 6, 7, 8, 9, 36, 35, 34, 33, 0
\item $v = 26$: 32, 31, 30, 29, 1, 2, 7, 6, 5, 4, 3, 34, 35, 36, 37, 12, 13, 8, 9, 10, 11, 16, 17, 18, 19, 14, 15, 38, 39, 40, 41, 24, 25, 20, 21, 22, 23, 28, 27, 42, 43, 33, 0
\item $v = 27$: 32, 4, 3, 2, 1, 29, 28, 23, 24, 25, 26, 31, 30, 43, 42, 41, 40, 39, 18, 17, 22, 21, 20, 19, 14, 13, 12, 37, 38, 15, 16, 11, 10, 5, 6, 7, 8, 9, 36, 35, 34, 33, 0
\item $v = 28$: 32, 31, 26, 27, 42, 41, 40, 21, 22, 23, 24, 25, 20, 19, 14, 15, 38, 39, 18, 17, 16, 11, 10, 9, 8, 13, 12, 37, 36, 35, 34, 3, 4, 5, 6, 7, 2, 1, 29, 30, 43, 33, 0
\item $v = 29$: 32, 4, 3, 34, 33, 43, 30, 31, 26, 25, 24, 23, 28, 27, 42, 41, 40, 39, 18, 17, 22, 21, 20, 19, 14, 13, 12, 37, 38, 15, 16, 11, 10, 5, 6, 35, 36, 9, 8, 7, 2, 1, 0
\item $v = 30$: 32, 31, 26, 25, 20, 21, 22, 23, 24, 41, 40, 39, 38, 15, 14, 19, 18, 17, 16, 11, 10, 9, 8, 13, 12, 37, 36, 35, 34, 3, 4, 5, 6, 7, 2, 1, 29, 28, 27, 42, 43, 33, 0
\item $v = 31$: 32, 4, 3, 2, 1, 29, 30, 43, 42, 41, 24, 23, 28, 27, 26, 25, 20, 19, 18, 17, 22, 21, 40, 39, 38, 37, 12, 13, 14, 15, 16, 11, 10, 5, 6, 7, 8, 9, 36, 35, 34, 33, 0
\item $v = 32$: 12, 13, 8, 7, 2, 3, 4, 5, 6, 35, 34, 33, 0, 1, 29, 28, 27, 26, 31, 30, 43, 42, 41, 40, 21, 20, 25, 24, 23, 22, 17, 16, 15, 14, 19, 18, 39, 38, 37, 36, 9, 10, 11
\item $v = 33$: 32, 4, 5, 6, 7, 2, 3, 34, 35, 36, 37, 12, 13, 8, 9, 10, 11, 16, 17, 18, 19, 14, 15, 38, 39, 40, 41, 24, 23, 22, 21, 20, 25, 26, 31, 30, 43, 42, 27, 28, 29, 1, 0
\item $v = 34$: 12, 13, 14, 15, 38, 37, 36, 35, 6, 5, 10, 9, 8, 7, 2, 3, 4, 32, 31, 30, 29, 1, 0, 33, 43, 42, 41, 24, 23, 28, 27, 26, 25, 20, 19, 18, 39, 40, 21, 22, 17, 16, 11
\item $v = 35$: 32, 4, 5, 6, 7, 8, 13, 12, 37, 36, 9, 10, 11, 16, 17, 18, 19, 14, 15, 38, 39, 40, 41, 24, 23, 22, 21, 20, 25, 26, 31, 30, 29, 28, 27, 42, 43, 33, 34, 3, 2, 1, 0
\item $v = 36$: 32, 31, 26, 25, 24, 23, 28, 27, 42, 41, 40, 39, 18, 17, 22, 21, 20, 19, 14, 13, 12, 37, 38, 15, 16, 11, 10, 9, 8, 7, 2, 3, 4, 5, 6, 35, 34, 33, 43, 30, 29, 1, 0
\item $v = 37$: 12, 13, 8, 7, 6, 5, 10, 9, 36, 35, 34, 33, 0, 1, 2, 3, 4, 32, 31, 26, 27, 28, 29, 30, 43, 42, 41, 40, 21, 22, 23, 24, 25, 20, 19, 14, 15, 38, 39, 18, 17, 16, 11
\item $v = 38$: 32, 31, 26, 25, 24, 23, 28, 27, 42, 41, 40, 39, 18, 17, 22, 21, 20, 19, 14, 15, 16, 11, 10, 9, 8, 13, 12, 37, 36, 35, 34, 3, 4, 5, 6, 7, 2, 1, 29, 30, 43, 33, 0
\item $v = 39$: 32, 4, 3, 2, 1, 29, 28, 27, 26, 31, 30, 43, 42, 41, 40, 21, 20, 25, 24, 23, 22, 17, 18, 19, 14, 13, 12, 37, 38, 15, 16, 11, 10, 5, 6, 7, 8, 9, 36, 35, 34, 33, 0
\item $v = 40$: 32, 31, 26, 25, 24, 41, 42, 27, 28, 23, 22, 21, 20, 19, 14, 15, 38, 39, 18, 17, 16, 11, 10, 9, 8, 13, 12, 37, 36, 35, 34, 3, 4, 5, 6, 7, 2, 1, 29, 30, 43, 33, 0
\item $v = 41$: 32, 4, 3, 2, 1, 29, 28, 27, 42, 43, 30, 31, 26, 25, 24, 23, 22, 17, 18, 19, 20, 21, 40, 39, 38, 37, 12, 13, 14, 15, 16, 11, 10, 5, 6, 7, 8, 9, 36, 35, 34, 33, 0
\item $v = 42$: 32, 31, 26, 27, 28, 23, 22, 21, 20, 25, 24, 41, 40, 39, 38, 15, 14, 19, 18, 17, 16, 11, 10, 9, 8, 13, 12, 37, 36, 35, 34, 3, 4, 5, 6, 7, 2, 1, 29, 30, 43, 33, 0
\item $v = 43$: 32, 4, 3, 2, 1, 29, 30, 31, 26, 25, 24, 23, 28, 27, 42, 41, 40, 39, 18, 17, 22, 21, 20, 19, 14, 13, 12, 37, 38, 15, 16, 11, 10, 5, 6, 7, 8, 9, 36, 35, 34, 33, 0
}
\end{itemize}

\noindent
\textbf{Proof that $b_0c_0$ and $b_4c_4$ are suitable edges for $J_{13}$}

Figure~\ref{fig:J13} shows the flower snark $J_{13}$ and here $b_0c_0$ and $b_4c_4$ correspond to the edges $(0,38)$ and $(11, 12)$, respectively.

$(0,11), (0,12), (38,11)$ and $(38,12)$ are good in $J_{13} - \{(0,38),(11,12)\}$ due to the following hamiltonian paths, respectively:

\begin{itemize}
{\small
\item 11, 10, 5, 6, 7, 8, 9, 42, 41, 40, 39, 51, 36, 35, 34, 29, 28, 23, 22, 17, 16, 15, 44, 43, 12, 13, 14, 19, 18, 45, 46, 21, 20, 25, 24, 47, 48, 27, 26, 31, 30, 49, 50, 33, 32, 37, 38, 4, 3, 2, 1, 0
\item 12, 13, 14, 15, 16, 11, 10, 5, 4, 38, 37, 32, 31, 30, 29, 34, 33, 50, 49, 48, 47, 24, 23, 28, 27, 26, 25, 20, 19, 18, 17, 22, 21, 46, 45, 44, 43, 42, 9, 8, 7, 6, 41, 40, 3, 2, 1, 35, 36, 51, 39, 0
\item 11, 10, 5, 4, 3, 2, 1, 0, 39, 40, 41, 6, 7, 8, 9, 42, 43, 12, 13, 14, 19, 18, 17, 16, 15, 44, 45, 46, 47, 24, 23, 22, 21, 20, 25, 26, 31, 30, 29, 28, 27, 48, 49, 50, 51, 36, 35, 34, 33, 32, 37, 38
\item 12, 13, 8, 7, 2, 3, 4, 5, 6, 41, 40, 39, 0, 1, 35, 34, 29, 28, 23, 22, 17, 18, 45, 44, 43, 42, 9, 10, 11, 16, 15, 14, 19, 20, 21, 46, 47, 24, 25, 26, 27, 48, 49, 30, 31, 32, 33, 50, 51, 36, 37, 38}
\end{itemize}

$(0,38),(11,12))$ is good in $J_{13} - \{(0,38),(11,12)\}$ due to the following two disjoint paths with end-vertices $0$ and $38$, and $11$ and $12$, respectively, which together span $J_{13}$.

\begin{itemize}
{\small
\item 38, 37, 32, 31, 26, 25, 20, 19, 14, 13, 8, 7, 2, 1, 0
\item 12, 43, 44, 15, 16, 17, 18, 45, 46, 21, 22, 23, 24, 47, 48, 27, 28, 29, 30, 49, 50, 33, 34, 35, 36, 51, 39, 40, 3, 4, 5, 6, 41, 42, 9, 10, 11}
\end{itemize}

The following hamiltonian paths show that at least one of $(0,38)$ or $(11,12)$ is good in $J_{13} - \{v,(0,38),(11,12)\}$ for every $v \in V(J_{13})$.

\begin{itemize}
{\footnotesize
\item $v = 0$: 12, 13, 14, 15, 44, 43, 42, 41, 6, 5, 10, 9, 8, 7, 2, 1, 35, 36, 37, 38, 4, 3, 40, 39, 51, 50, 49, 30, 29, 34, 33, 32, 31, 26, 25, 24, 23, 28, 27, 48, 47, 46, 45, 18, 19, 20, 21, 22, 17, 16, 11
\item $v = 1$: 38, 4, 5, 6, 7, 2, 3, 40, 41, 42, 43, 12, 13, 8, 9, 10, 11, 16, 17, 18, 19, 14, 15, 44, 45, 46, 47, 24, 23, 22, 21, 20, 25, 26, 31, 30, 49, 48, 27, 28, 29, 34, 35, 36, 37, 32, 33, 50, 51, 39, 0
\item $v = 2$: 12, 43, 44, 15, 14, 13, 8, 7, 6, 5, 10, 9, 42, 41, 40, 3, 4, 38, 37, 36, 35, 1, 0, 39, 51, 50, 49, 30, 29, 34, 33, 32, 31, 26, 25, 24, 23, 28, 27, 48, 47, 46, 45, 18, 19, 20, 21, 22, 17, 16, 11
\item $v = 3$: 38, 4, 5, 6, 7, 2, 1, 35, 34, 33, 32, 37, 36, 51, 50, 49, 48, 27, 26, 31, 30, 29, 28, 23, 22, 21, 20, 25, 24, 47, 46, 45, 44, 15, 14, 19, 18, 17, 16, 11, 10, 9, 8, 13, 12, 43, 42, 41, 40, 39, 0
\item $v = 4$: 38, 37, 32, 31, 30, 29, 34, 33, 50, 49, 48, 47, 24, 23, 28, 27, 26, 25, 20, 19, 18, 17, 22, 21, 46, 45, 44, 43, 12, 13, 14, 15, 16, 11, 10, 5, 6, 7, 8, 9, 42, 41, 40, 3, 2, 1, 35, 36, 51, 39, 0
\item $v = 5$: 38, 4, 3, 2, 1, 35, 34, 33, 32, 37, 36, 51, 50, 49, 48, 27, 26, 31, 30, 29, 28, 23, 22, 21, 20, 25, 24, 47, 46, 45, 44, 15, 14, 19, 18, 17, 16, 11, 10, 9, 42, 43, 12, 13, 8, 7, 6, 41, 40, 39, 0
\item $v = 6$: 38, 37, 32, 31, 30, 29, 34, 33, 50, 49, 48, 47, 24, 23, 28, 27, 26, 25, 20, 19, 18, 17, 22, 21, 46, 45, 44, 43, 12, 13, 14, 15, 16, 11, 10, 5, 4, 3, 2, 7, 8, 9, 42, 41, 40, 39, 51, 36, 35, 1, 0
\item $v = 7$: 38, 4, 5, 6, 41, 42, 43, 12, 13, 8, 9, 10, 11, 16, 17, 18, 19, 14, 15, 44, 45, 46, 47, 24, 23, 22, 21, 20, 25, 26, 31, 30, 29, 28, 27, 48, 49, 50, 51, 36, 37, 32, 33, 34, 35, 1, 2, 3, 40, 39, 0
\item $v = 8$: 38, 37, 32, 31, 30, 29, 34, 33, 50, 49, 48, 47, 24, 23, 28, 27, 26, 25, 20, 19, 18, 17, 22, 21, 46, 45, 44, 43, 12, 13, 14, 15, 16, 11, 10, 9, 42, 41, 40, 3, 4, 5, 6, 7, 2, 1, 35, 36, 51, 39, 0
\item $v = 9$: 38, 4, 3, 2, 1, 35, 34, 33, 32, 37, 36, 51, 50, 49, 48, 27, 26, 31, 30, 29, 28, 23, 22, 21, 20, 25, 24, 47, 46, 45, 44, 15, 14, 19, 18, 17, 16, 11, 10, 5, 6, 7, 8, 13, 12, 43, 42, 41, 40, 39, 0
\item $v = 10$: 12, 13, 14, 15, 44, 43, 42, 9, 8, 7, 2, 3, 40, 41, 6, 5, 4, 38, 37, 36, 35, 1, 0, 39, 51, 50, 49, 30, 29, 34, 33, 32, 31, 26, 25, 24, 23, 28, 27, 48, 47, 46, 45, 18, 19, 20, 21, 22, 17, 16, 11
\item $v = 11$: 38, 4, 3, 2, 1, 35, 34, 33, 32, 37, 36, 51, 50, 49, 48, 27, 26, 31, 30, 29, 28, 23, 22, 21, 46, 47, 24, 25, 20, 19, 14, 15, 16, 17, 18, 45, 44, 43, 12, 13, 8, 7, 6, 5, 10, 9, 42, 41, 40, 39, 0
\item $v = 12$: 38, 37, 32, 31, 30, 29, 34, 33, 50, 49, 48, 47, 24, 23, 28, 27, 26, 25, 20, 19, 18, 17, 22, 21, 46, 45, 44, 43, 42, 9, 8, 13, 14, 15, 16, 11, 10, 5, 4, 3, 2, 7, 6, 41, 40, 39, 51, 36, 35, 1, 0
\item $v = 13$: 12, 43, 42, 41, 6, 5, 10, 9, 8, 7, 2, 1, 0, 39, 40, 3, 4, 38, 37, 32, 33, 34, 35, 36, 51, 50, 49, 48, 27, 26, 31, 30, 29, 28, 23, 22, 21, 20, 25, 24, 47, 46, 45, 44, 15, 14, 19, 18, 17, 16, 11
\item $v = 14$: 38, 37, 32, 31, 30, 29, 34, 33, 50, 49, 48, 47, 24, 23, 28, 27, 26, 25, 20, 19, 18, 17, 22, 21, 46, 45, 44, 15, 16, 11, 10, 9, 8, 13, 12, 43, 42, 41, 40, 3, 4, 5, 6, 7, 2, 1, 35, 36, 51, 39, 0
\item $v = 15$: 38, 4, 3, 2, 1, 35, 34, 33, 32, 37, 36, 51, 50, 49, 48, 27, 26, 31, 30, 29, 28, 23, 22, 21, 20, 25, 24, 47, 46, 45, 44, 43, 12, 13, 14, 19, 18, 17, 16, 11, 10, 5, 6, 7, 8, 9, 42, 41, 40, 39, 0
\item $v = 16$: 12, 13, 8, 7, 2, 3, 40, 41, 6, 5, 4, 38, 37, 36, 35, 1, 0, 39, 51, 50, 49, 30, 29, 34, 33, 32, 31, 26, 25, 24, 23, 28, 27, 48, 47, 46, 45, 18, 17, 22, 21, 20, 19, 14, 15, 44, 43, 42, 9, 10, 11
\item $v = 17$: 38, 4, 3, 2, 1, 35, 34, 33, 32, 37, 36, 51, 50, 49, 48, 27, 26, 31, 30, 29, 28, 23, 22, 21, 20, 25, 24, 47, 46, 45, 18, 19, 14, 13, 12, 43, 44, 15, 16, 11, 10, 5, 6, 7, 8, 9, 42, 41, 40, 39, 0
\item $v = 18$: 38, 37, 32, 31, 30, 29, 34, 33, 50, 49, 48, 47, 24, 23, 28, 27, 26, 25, 20, 19, 14, 15, 44, 45, 46, 21, 22, 17, 16, 11, 10, 9, 8, 13, 12, 43, 42, 41, 40, 3, 4, 5, 6, 7, 2, 1, 35, 36, 51, 39, 0
\item $v = 19$: 38, 4, 3, 2, 1, 35, 34, 33, 32, 37, 36, 51, 50, 49, 48, 27, 28, 29, 30, 31, 26, 25, 20, 21, 46, 47, 24, 23, 22, 17, 18, 45, 44, 43, 12, 13, 14, 15, 16, 11, 10, 5, 6, 7, 8, 9, 42, 41, 40, 39, 0
\item $v = 20$: 38, 37, 32, 31, 30, 29, 34, 33, 50, 49, 48, 47, 24, 25, 26, 27, 28, 23, 22, 21, 46, 45, 44, 15, 14, 19, 18, 17, 16, 11, 10, 9, 8, 13, 12, 43, 42, 41, 40, 3, 4, 5, 6, 7, 2, 1, 35, 36, 51, 39, 0
\item $v = 21$: 38, 4, 3, 2, 1, 35, 34, 33, 32, 37, 36, 51, 50, 49, 48, 27, 26, 31, 30, 29, 28, 23, 22, 17, 18, 19, 20, 25, 24, 47, 46, 45, 44, 43, 12, 13, 14, 15, 16, 11, 10, 5, 6, 7, 8, 9, 42, 41, 40, 39, 0
\item $v = 22$: 38, 37, 32, 31, 30, 29, 34, 33, 50, 49, 48, 47, 24, 23, 28, 27, 26, 25, 20, 21, 46, 45, 44, 15, 14, 19, 18, 17, 16, 11, 10, 9, 8, 13, 12, 43, 42, 41, 40, 3, 4, 5, 6, 7, 2, 1, 35, 36, 51, 39, 0
\item $v = 23$: 38, 4, 3, 2, 1, 35, 34, 33, 32, 37, 36, 51, 50, 49, 48, 27, 28, 29, 30, 31, 26, 25, 24, 47, 46, 45, 18, 17, 22, 21, 20, 19, 14, 13, 12, 43, 44, 15, 16, 11, 10, 5, 6, 7, 8, 9, 42, 41, 40, 39, 0
\item $v = 24$: 38, 37, 32, 31, 30, 29, 34, 33, 50, 49, 48, 47, 46, 21, 22, 23, 28, 27, 26, 25, 20, 19, 14, 15, 44, 45, 18, 17, 16, 11, 10, 9, 8, 13, 12, 43, 42, 41, 40, 3, 4, 5, 6, 7, 2, 1, 35, 36, 51, 39, 0
\item $v = 25$: 38, 4, 3, 2, 1, 35, 34, 33, 32, 37, 36, 51, 50, 49, 48, 27, 26, 31, 30, 29, 28, 23, 24, 47, 46, 45, 18, 17, 22, 21, 20, 19, 14, 13, 12, 43, 44, 15, 16, 11, 10, 5, 6, 7, 8, 9, 42, 41, 40, 39, 0
\item $v = 26$: 38, 37, 32, 31, 30, 29, 34, 33, 50, 49, 48, 27, 28, 23, 22, 21, 20, 25, 24, 47, 46, 45, 44, 15, 14, 19, 18, 17, 16, 11, 10, 9, 8, 13, 12, 43, 42, 41, 40, 3, 4, 5, 6, 7, 2, 1, 35, 36, 51, 39, 0
\item $v = 27$: 38, 4, 3, 2, 1, 35, 34, 33, 32, 37, 36, 51, 50, 49, 48, 47, 24, 23, 28, 29, 30, 31, 26, 25, 20, 19, 18, 17, 22, 21, 46, 45, 44, 43, 12, 13, 14, 15, 16, 11, 10, 5, 6, 7, 8, 9, 42, 41, 40, 39, 0
\item $v = 28$: 38, 37, 32, 31, 30, 29, 34, 33, 50, 49, 48, 27, 26, 25, 20, 21, 22, 23, 24, 47, 46, 45, 44, 15, 14, 19, 18, 17, 16, 11, 10, 9, 8, 13, 12, 43, 42, 41, 40, 3, 4, 5, 6, 7, 2, 1, 35, 36, 51, 39, 0
\item $v = 29$: 38, 4, 3, 2, 1, 35, 34, 33, 32, 37, 36, 51, 50, 49, 30, 31, 26, 25, 24, 23, 28, 27, 48, 47, 46, 45, 18, 17, 22, 21, 20, 19, 14, 13, 12, 43, 44, 15, 16, 11, 10, 5, 6, 7, 8, 9, 42, 41, 40, 39, 0
\item $v = 30$: 38, 37, 32, 31, 26, 27, 28, 29, 34, 33, 50, 49, 48, 47, 46, 21, 22, 23, 24, 25, 20, 19, 14, 15, 44, 45, 18, 17, 16, 11, 10, 9, 8, 13, 12, 43, 42, 41, 40, 3, 4, 5, 6, 7, 2, 1, 35, 36, 51, 39, 0
\item $v = 31$: 38, 4, 3, 2, 1, 35, 34, 33, 32, 37, 36, 51, 50, 49, 30, 29, 28, 23, 24, 25, 26, 27, 48, 47, 46, 45, 18, 17, 22, 21, 20, 19, 14, 13, 12, 43, 44, 15, 16, 11, 10, 5, 6, 7, 8, 9, 42, 41, 40, 39, 0
\item $v = 32$: 38, 37, 36, 35, 1, 2, 7, 6, 5, 4, 3, 40, 41, 42, 43, 12, 13, 8, 9, 10, 11, 16, 17, 18, 19, 14, 15, 44, 45, 46, 47, 24, 23, 22, 21, 20, 25, 26, 31, 30, 49, 48, 27, 28, 29, 34, 33, 50, 51, 39, 0
\item $v = 33$: 38, 4, 3, 2, 1, 35, 34, 29, 30, 31, 32, 37, 36, 51, 50, 49, 48, 47, 24, 23, 28, 27, 26, 25, 20, 19, 18, 17, 22, 21, 46, 45, 44, 43, 12, 13, 14, 15, 16, 11, 10, 5, 6, 7, 8, 9, 42, 41, 40, 39, 0
\item $v = 34$: 38, 37, 32, 33, 50, 49, 48, 27, 26, 31, 30, 29, 28, 23, 22, 21, 20, 25, 24, 47, 46, 45, 44, 15, 14, 19, 18, 17, 16, 11, 10, 9, 8, 13, 12, 43, 42, 41, 40, 3, 4, 5, 6, 7, 2, 1, 35, 36, 51, 39, 0
\item $v = 35$: 38, 4, 3, 40, 39, 51, 36, 37, 32, 31, 30, 29, 34, 33, 50, 49, 48, 47, 24, 23, 28, 27, 26, 25, 20, 19, 18, 17, 22, 21, 46, 45, 44, 43, 12, 13, 14, 15, 16, 11, 10, 5, 6, 41, 42, 9, 8, 7, 2, 1, 0
\item $v = 36$: 38, 37, 32, 31, 26, 27, 28, 29, 30, 49, 48, 47, 46, 21, 22, 23, 24, 25, 20, 19, 14, 15, 44, 45, 18, 17, 16, 11, 10, 9, 8, 13, 12, 43, 42, 41, 40, 3, 4, 5, 6, 7, 2, 1, 35, 34, 33, 50, 51, 39, 0
\item $v = 37$: 38, 4, 3, 2, 1, 35, 36, 51, 50, 49, 30, 29, 34, 33, 32, 31, 26, 25, 24, 23, 28, 27, 48, 47, 46, 45, 18, 17, 22, 21, 20, 19, 14, 13, 12, 43, 44, 15, 16, 11, 10, 5, 6, 7, 8, 9, 42, 41, 40, 39, 0
\item $v = 38$: 12, 13, 8, 7, 2, 3, 4, 5, 6, 41, 40, 39, 0, 1, 35, 34, 33, 32, 37, 36, 51, 50, 49, 48, 27, 26, 31, 30, 29, 28, 23, 22, 21, 46, 47, 24, 25, 20, 19, 14, 15, 16, 17, 18, 45, 44, 43, 42, 9, 10, 11
\item $v = 39$: 38, 4, 5, 6, 7, 2, 3, 40, 41, 42, 43, 12, 13, 8, 9, 10, 11, 16, 17, 18, 19, 14, 15, 44, 45, 46, 47, 24, 23, 22, 21, 20, 25, 26, 31, 30, 29, 28, 27, 48, 49, 50, 51, 36, 37, 32, 33, 34, 35, 1, 0
\item $v = 40$: 12, 13, 14, 15, 44, 43, 42, 41, 6, 5, 10, 9, 8, 7, 2, 3, 4, 38, 37, 36, 35, 1, 0, 39, 51, 50, 49, 30, 29, 34, 33, 32, 31, 26, 25, 24, 23, 28, 27, 48, 47, 46, 45, 18, 19, 20, 21, 22, 17, 16, 11
\item $v = 41$: 38, 4, 5, 6, 7, 8, 13, 12, 43, 42, 9, 10, 11, 16, 17, 18, 19, 14, 15, 44, 45, 46, 47, 24, 23, 22, 21, 20, 25, 26, 31, 30, 29, 28, 27, 48, 49, 50, 51, 36, 37, 32, 33, 34, 35, 1, 2, 3, 40, 39, 0
\item $v = 42$: 38, 37, 32, 31, 30, 29, 34, 33, 50, 49, 48, 47, 24, 23, 28, 27, 26, 25, 20, 19, 18, 17, 22, 21, 46, 45, 44, 43, 12, 13, 14, 15, 16, 11, 10, 9, 8, 7, 2, 3, 4, 5, 6, 41, 40, 39, 51, 36, 35, 1, 0
\item $v = 43$: 12, 13, 8, 7, 6, 5, 10, 9, 42, 41, 40, 39, 0, 1, 2, 3, 4, 38, 37, 32, 33, 34, 35, 36, 51, 50, 49, 48, 27, 26, 31, 30, 29, 28, 23, 22, 21, 20, 25, 24, 47, 46, 45, 44, 15, 14, 19, 18, 17, 16, 11
\item $v = 44$: 38, 37, 32, 31, 30, 29, 34, 33, 50, 49, 48, 47, 24, 25, 26, 27, 28, 23, 22, 17, 18, 45, 46, 21, 20, 19, 14, 15, 16, 11, 10, 9, 8, 13, 12, 43, 42, 41, 40, 3, 4, 5, 6, 7, 2, 1, 35, 36, 51, 39, 0
\item $v = 45$: 38, 4, 3, 2, 1, 35, 34, 33, 32, 37, 36, 51, 50, 49, 48, 27, 28, 29, 30, 31, 26, 25, 20, 21, 46, 47, 24, 23, 22, 17, 18, 19, 14, 13, 12, 43, 44, 15, 16, 11, 10, 5, 6, 7, 8, 9, 42, 41, 40, 39, 0
\item $v = 46$: 38, 37, 32, 31, 30, 29, 34, 33, 50, 49, 48, 47, 24, 25, 26, 27, 28, 23, 22, 21, 20, 19, 14, 15, 44, 45, 18, 17, 16, 11, 10, 9, 8, 13, 12, 43, 42, 41, 40, 3, 4, 5, 6, 7, 2, 1, 35, 36, 51, 39, 0
\item $v = 47$: 38, 4, 3, 2, 1, 35, 34, 33, 32, 37, 36, 51, 50, 49, 48, 27, 26, 31, 30, 29, 28, 23, 24, 25, 20, 19, 18, 17, 22, 21, 46, 45, 44, 43, 12, 13, 14, 15, 16, 11, 10, 5, 6, 7, 8, 9, 42, 41, 40, 39, 0
\item $v = 48$: 38, 37, 32, 31, 30, 49, 50, 33, 34, 29, 28, 27, 26, 25, 20, 21, 22, 23, 24, 47, 46, 45, 44, 15, 14, 19, 18, 17, 16, 11, 10, 9, 8, 13, 12, 43, 42, 41, 40, 3, 4, 5, 6, 7, 2, 1, 35, 36, 51, 39, 0
\item $v = 49$: 38, 4, 3, 2, 1, 35, 34, 33, 50, 51, 36, 37, 32, 31, 30, 29, 28, 23, 24, 25, 26, 27, 48, 47, 46, 45, 18, 17, 22, 21, 20, 19, 14, 13, 12, 43, 44, 15, 16, 11, 10, 5, 6, 7, 8, 9, 42, 41, 40, 39, 0
\item $v = 50$: 38, 37, 32, 33, 34, 29, 28, 27, 26, 31, 30, 49, 48, 47, 46, 21, 22, 23, 24, 25, 20, 19, 14, 15, 44, 45, 18, 17, 16, 11, 10, 9, 8, 13, 12, 43, 42, 41, 40, 3, 4, 5, 6, 7, 2, 1, 35, 36, 51, 39, 0
\item $v = 51$: 38, 4, 3, 2, 1, 35, 36, 37, 32, 31, 30, 29, 34, 33, 50, 49, 48, 47, 24, 23, 28, 27, 26, 25, 20, 19, 18, 17, 22, 21, 46, 45, 44, 43, 12, 13, 14, 15, 16, 11, 10, 5, 6, 7, 8, 9, 42, 41, 40, 39, 0
}
\end{itemize}

\end{document}